\documentclass[12pt, reqno, a4paper]{amsart}
\usepackage[margin=1.2in]{geometry}
\numberwithin{equation}{section}
\usepackage{amssymb,amsfonts,amsthm}
\usepackage[utf8]{inputenc}
\usepackage{listings}
\usepackage{bm}
\usepackage[hyperpageref]{backref}
\usepackage{esint}
\usepackage{color}
\usepackage[bookmarks]{hyperref}
\usepackage{siunitx}
\usepackage{tikz}
\usetikzlibrary{positioning,arrows.meta}
\usepackage{bigints}
\usepackage{tikz}
\usepackage{enumitem}
\usepackage{subcaption}
\usepackage{hyperref}
\usepackage{mathtools}
\usepackage{booktabs}

\newcommand{\Cc}{C^1_c}
\newcommand{\Om}{\Omega}
\DeclareMathOperator{\supp}{supp}

\allowdisplaybreaks[4]

\newtheoremstyle{myremark}{10pt}{10pt}{}{}{\bfseries}{.}{.5em}{}

\newtheorem{theorem}{Theorem}[section]
\newtheorem{corollary}[theorem]{Corollary}

\newtheorem{lemma}[theorem]{Lemma}
\newtheorem{proposition}[theorem]{Proposition}

\theoremstyle{definition}
\newtheorem{definition}[theorem]{Definition}
\newtheorem{remark}{Remark}

\usepackage{hyperref}

 \newcommand{\R}{\mathbb{R}}

 \newcommand{\al}{\alpha}

\DeclareMathOperator{\SO}{SO}

\DeclareMathOperator{\dist}{dist}

\allowdisplaybreaks[4]

\begin{document}

\title[Hardy--Sobolev--Maz'ya inequalities]{Hardy--Sobolev--Maz'ya inequalities for $\boldsymbol{1<p<2}$}

\author[A{.} Banerjee, M{.} Kijaczko, and V{.} Sahu]{Avas Banerjee, Michał Kijaczko, and Vivek Sahu}

\address[A{.}B{.}]{Theoretical Statistics and Mathematics Unit,
Indian Statistical Institute, Delhi Centre, S.J. Sansanwal Marg, New Delhi, Delhi 110016, India}
\email{avas24r@isid.ac.in}

\address[M{.}K{.}]{Faculty of Pure and Applied Mathematics, Wrocław University of Science and Technology, Wybrzeże Wyspiańskiego 27, 50-370 Wrocław, Poland}
\email{michal.kijaczko@pwr.edu.pl}

\address[V{.}S{.}]{Theoretical Statistics and Mathematics Unit,
Indian Statistical Institute, Delhi Centre, S.J. Sansanwal Marg, New Delhi, Delhi 110016, India}
\email{vivek@isid.ac.in,  viiveksahu@gmail.com}

\subjclass[2020]{26D10; 46E35; 35A23}

\keywords{Hardy inequality, Hardy--Sobolev--Maz'ya inequality, plane singularity, affine subspace, half-space.}

\date{}

\dedicatory{}

\begin{abstract}
We establish Hardy–Sobolev–Maz’ya inequalities in the subquadratic range $1<p<2$ for singularities supported on affine subspaces and for arbitrary domains. We first prove a Hardy–Sobolev–Maz’ya inequality for singularities supported on affine subspaces of arbitrary codimension $1\leq k<N$, showing that the deficit in the sharp Hardy inequality controls a suitable weighted Sobolev norm. Combining the same convexity estimate with the one-dimensional approach of Frank and Loss in \cite{FrankLoss2012}, we also establish Hardy--Sobolev--Maz'ya inequalities on arbitrary domains with the mean distance to the boundary and thereby solve a problem posed in \cite{FrankLoss2012}. These results complete the previously unresolved subquadratic range $1<p<2$ and, together with the known results for $p\geq 2$, provide a unified Hardy--Sobolev--Maz'ya theory for all $1<p<N$ associated with plane singularities and general domains.
\end{abstract}

\maketitle


\section{Introduction}

Hardy inequalities constitute one of the cornerstones of modern analysis and have profound connections with mathematical physics, partial differential equations, spectral theory, and the calculus of variations. Their origins can be traced back to the classical one-dimensional inequality established by Hardy \cite{Hardy1925}, which states that for every $u\in C^{\infty}_c\left((0,\infty)\right)$ and $1<p<\infty$,
\begin{equation*}
    \int_0^\infty |u'(t)|^p\,dt
\geq
\left(\frac{p-1}{p}\right)^p
\int_0^\infty \frac{|u(t)|^p}{t^p}\,dt .
\end{equation*}
This inequality provides a fundamental relationship between a function and its derivative and has inspired a vast body of research devoted to multidimensional extensions, sharp constants, and remainder terms.

Among the most important higher-dimensional counterparts is the Hardy inequality with a point singularity. For $1<p<N$, it asserts
\begin{equation*}
    \int_{\mathbb{R}^{N}} |\nabla u(x)|^{p} \,dx
 \geq
\left(\frac{N-p}{p}\right)^{p}
\int_{\mathbb{R}^{N}}
\frac{|u(x)|^p}{|x|^p}\,dx,
\quad \forall \,
u\in C^{\infty}_{c}(\mathbb{R}^{N}),
\end{equation*}
where the constant $\left(\frac{N-p}{p}\right)^{p}$ is optimal. This inequality has become a fundamental tool in the study of elliptic and parabolic equations, spectral estimates, singular potentials, and various problems in geometric analysis.

\smallskip

A natural generalization arises when the singularity is distributed along a lower dimensional subspace rather than concentrated at a single point. Let $x \in \mathbb{R}^{N}$, we write $x= (x_{k}, x_{N-k}) \in \mathbb{R}^{k} \times \mathbb{R}^{N-k}$, where $1 \leq k < N$, and consider the affine subspace
\begin{equation*}
    K:= \{ x = (x_{k}, x_{N-k}) \in \mathbb{R}^{k} \times \mathbb{R}^{N-k} : x_{k} = 0 \},
\end{equation*}
which has codimension $k$. In the limiting case $k=N$, the set $K$ reduces to the origin, that is, $K = \{ 0\}$. The corresponding Hardy inequality with singularity on a plane takes the form
\begin{equation}\label{Hardy inequality on submanifold}
     \int_{\mathbb{R}^{N}} |\nabla u(x)|^{p} \,dx
 \geq
\left|\frac{k-p}{p}\right|^{p}
\int_{\mathbb{R}^{N}}
\frac{|u(x)|^p}{|x_{k}|^p}\,dx.
\end{equation}
This inequality holds for all $u\in C^{\infty}_{c}(\mathbb{R}^{N})$ when $1<p<k$, and for all $u\in C^{\infty}_{c}(\mathbb{R}^{N} \setminus K)$ when $p>k$. Moreover, the constant $\left|\frac{k-p}{p}\right|^{p}$ is sharp. Hardy inequalities associated with singular subspaces have attracted considerable attention over the past decades, owing both to their intrinsic analytical interest and to their applications in the study of partial differential equations with anisotropic or singular structures.

\smallskip
\subsection{Hardy--Sobolev--Maz'ya inequalities on $\R^N$}
A significant breakthrough in the theory of Hardy inequalities was achieved by Maz'ya \cite{MazyaBook}, who observed that Hardy inequalities admit remarkable Sobolev-type improvements. More precisely, the deficit obtained by subtracting the Hardy term from the Dirichlet energy often controls a Sobolev norm of the underlying function. Such inequalities are now commonly referred to as Hardy--Sobolev--Maz'ya (HSM) inequalities and have become a fundamental tool in the analysis of singular variational problems and partial differential equations.

One of the pioneering results in this direction was established by Maz'ya in \cite[Corollary 3, Section 2.1.7]{MazyaBook}. He proved that for $1 \leq k < N$, $p=2$, and $2 < q \leq \frac{2N}{N-2}$, there exists a constant $C>0$ such that
\begin{equation*}
     \int_{\mathbb{R}^{N}} |\nabla u(x)|^{2} \,dx
-
\left(\frac{k-2}{2}\right)^{2}
\int_{\mathbb{R}^{N}}
\frac{|u(x)|^{2}}{|x_{k}|^2}\,dx \geq C \left( \int_{\mathbb{R}^{N}} \frac{|u(x)|^{q}}{|x_{k}|^{N+ (2-N)\frac{q}{2}}}  \right)^{\frac{2}{q}},
\end{equation*}
for all admissible functions $u$.

In the critical case $q=\frac{2N}{N-2}$, the weight in the right-hand side disappears, and the above inequality reduces to
\begin{equation*}
    \int_{\mathbb{R}^{N}} |\nabla u(x)|^{2} \,dx
-
\left(\frac{k-2}{2}\right)^{2}
\int_{\mathbb{R}^{N}}
\frac{|u(x)|^{2}}{|x_{k}|^2}\,dx \geq C \left( \int_{\mathbb{R}^{N}} |u(x)|^{q} \, dx\right)^{\frac{2}{q}}.
\end{equation*}

Maz'ya further observed that these HSM inequalities fail in the limiting case $k=N$, corresponding to a point singularity. This phenomenon highlights a fundamental distinction between singularities supported on lower-dimensional subspaces and those concentrated at a single point.

The HSM inequalities were subsequently extended to the nonlinear setting by Barbatis, Filippas, and Tertikas \cite{BarbatisTAMS2004}. Specifically, they proved that for $1\leq k<N$, $2\leq p<N$, and $p<q\leq \frac{Np}{N-p}$, there exists a constant $C>0$ such that
\begin{equation}\label{HSM}
     \int_{\mathbb{R}^{N}} |\nabla u(x)|^{p} \,dx
-
\left| \frac{k-p}{p}\right|^{p}
\int_{\mathbb{R}^{N}}
\frac{|u(x)|^{p}}{|x_{k}|^p}\,dx \geq C \left( \int_{\mathbb{R}^{N}} \frac{|u(x)|^{q}}{|x_{k}|^{ N+(p-N)\frac{q}{p}}}  \right)^{\frac{p}{q}},
\end{equation}
for every $u\in C^{\infty}_{c}(\mathbb{R}^{N}\setminus K)$.

This result extends Maz'ya's HSM inequality from the quadratic
case $p=2$ to the nonlinear regime $p\geq 2$. It shows that the Hardy
deficit controls a weighted Sobolev norm of the function, providing a
quantitative improvement of the classical Hardy inequality in the
presence of singular subspaces. The value of the sharp constant $C$ in \eqref{HSM} remains unknown, besides one case: for the half-space, $p=2$, $N=3$, and $q=6$, Benguria, Frank and Loss \cite{MR2424899} proved a remarkable fact that it coincides with sharp Sobolev constant. For a related result, it is also worth to mention the work of Tertikas and Tintarev \cite{MR2317783}, dealing with the existence of minimizer for \eqref{HSM}, when $p=2$. For more information on HSM and related inequalities involving Hardy deficit, we also refer to a review article of Psaradakis \cite{Psaradakis} and to \cite{FilippasPsaradakis}, where Hardy--Morrey inequalities for $p>N$ were studied.

However, the validity of HSM inequalities in the subquadratic range
$1<p<2$ has remained an open problem. The difficulty here is not merely
technical. The geometry of the Hardy singularity interacts with the
degeneracy of the map $\R^N\ni X\mapsto |X|^p$ near the origin. In particular,
when $1<p<2$, its Hessian becomes singular at $X=0$, preventing a direct
use of the second-order convexity arguments that are effective in the
range $p\geq2$.

\smallskip

The main objective of the present paper is to overcome this obstruction
and establish HSM inequalities in the full subquadratic range. Our
approach combines a quantitative remainder estimate for the convexity
of $|X|^p$ with a ground-state transformation and a weighted endpoint
$L^1$ Sobolev inequality. This allows us to recover a weighted Sobolev
norm from the deficit in the sharp Hardy inequality even when
$1<p<2$.

More precisely, we prove an HSM inequality for singularities supported
on affine subspaces of arbitrary codimension $1\leq k<N$. For
$p<q\leq p^*=Np/(N-p)$, the Hardy deficit controls a weighted
$L^q$-norm whose weight is determined by both the dimension $N$ and
the codimension $k$ of the singular set. Thus, our result extends the
known theory from $p\geq2$ to $1<p<2$ and completes the HSM theory for
affine singularities throughout the range $1<p<N$.

\smallskip

The codimension-one case $k=1$ is of particular interest. In this
case the singular set is a hyperplane, and restriction to either
component of its complement gives the corresponding inequality in a
half-space. This provides a direct connection between the general
affine-singularity result and the classical Hardy inequality on the
half-space.

\smallskip

Our main result establishes a Hardy--Sobolev--Maz'ya inequality
associated with the singular subspace $K$ for the range $1<p<2$.

\begin{theorem}\label{Theorem: HSM Plane Singularity}
 Let $N \geq 2$, $1<p<2$, and  $p<q \leq p^{*} = \frac{Np}{N-p}$. For any $x\in \mathbb{R}^{N}$, we write $x= (x_{k}, x_{N-k}) \in \mathbb{R}^{k} \times \mathbb{R}^{N-k}$, where $1 \leq k <N$. Then for all $u \in C^{1}_{c}(\mathbb{R}^{N})$ when $1<p<k$, and for all $u \in C^{1}_{c}(\mathbb{R}^{N} \setminus K)$ when $p > k$, there exists a constant $C=C(N,p,q,k)>0$ such that 
 \begin{equation*}
\int_{\mathbb{R}^{N}} |\nabla u(x)|^{p} \, dx - \left| \frac{k-p}{p} \right|^{p} \int_{\mathbb{R}^{N}} \frac{|u(x)|^{p}}{|x_{k}|^{p}} \, dx \geq C \left(\int_{\mathbb{R}^{N}}  \frac{|u(x)|^{q}}{|x_{k}|^{N+(p-N)\frac{q}{p}}} \, dx \right)^{\frac{p}{q}} .
 \end{equation*}
 In particular, at the critical exponent $q=p^*$, the weight on the
right-hand side disappears and
\begin{equation*}
\int_{\mathbb{R}^{N}} |\nabla u(x)|^{p} \, dx - \left| \frac{k-p}{p} \right|^{p} \int_{\mathbb{R}^{N}} \frac{|u(x)|^{p}}{|x_{k}|^{p}} \, dx \geq C \left(\int_{\mathbb{R}^{N}} |u(x)|^{p^{*}} \, dx \right)^{\frac{p}{p^{*}}} .
 \end{equation*}
\end{theorem}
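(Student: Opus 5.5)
The plan follows the route sketched in the introduction: a ground-state substitution, a subquadratic convexity remainder for $|X|^p$, and a weighted endpoint $L^1$ Sobolev inequality. Fix the ground state $\phi(x)=|x_k|^{-(k-p)/p}$, which solves $-\Delta_p\phi=\left|\frac{k-p}{p}\right|^p\phi^{p-1}|x_k|^{-p}$ away from $K$, and write $u=\phi v$. Then $\nabla u=v\nabla\phi+\phi\nabla v$.

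The first ingredient is the elementary convexity inequality valid for $1<p<2$:
\begin{equation*}
|a+b|^p-|a|^p-p|a|^{p-2}a\cdot b\ \geq\ c_p\,\frac{|b|^2}{(|a|+|b|)^{2-p}},\qquad a,b\in\R^N .
\end{equation*}
Apply it with $a=v\nabla\phi$ and $b=\phi\nabla v$ and integrate. The linear term $p\int|v\nabla\phi|^{p-2}v\nabla\phi\cdot\phi\nabla v$, after writing $|v|^{p-2}v\nabla v=\frac1p\nabla|v|^p$ and integrating by parts, cancels exactly against the Hardy term via the equation for $\phi$. The boundary terms at $K$ vanish either because $u$ is supported away from $K$ when $p>k$, or by the local integrability of the singularity $|x_k|^{1-k}$ when $p<k$. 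This leaves
\begin{equation*}
\int|\nabla u|^p-\left|\tfrac{k-p}{p}\right|^p\int\frac{|u|^p}{|x_k|^p}\ \geq\ c\int\frac{\phi^2|\nabla v|^2}{(|v||\nabla\phi|+\phi|\nabla v|)^{2-p}}=:c\,I .
\end{equation*}

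The main obstacle is converting $I$, which is only quadratic where $\phi|\nabla v|\ll|v\nabla\phi|$, into control of a $W^{1,1}$-type quantity. I would set $w=|v|^{s}$ for a suitable power $s$ (roughly $s=p$ or $s=q/p^{*}\cdot$const, tuned later) and estimate $\int \phi^{p}|\nabla w|\,\mu$ for a weight $\mu$ chosen so that Cauchy--Schwarz gives
\begin{equation*}
\int \phi^{\gamma}|v|^{s-1}|\nabla v|\ \le\ I^{1/2}\Big(\int \phi^{2\gamma-2}|v|^{2s-2}(|v||\nabla\phi|+\phi|\nabla v|)^{2-p}\Big)^{1/2}.
\end{equation*}
The second factor splits into a part bounded by $\int|u|^p|x_k|^{-p}$-type terms (with $|\nabla\phi|\sim\phi/|x_k|$) and a part containing $|\nabla u|^{p}$ via Hölder with exponents $2/(2-p)$ and $2/p$. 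Both parts are bounded by $\int|\nabla u|^p$, using the Hardy inequality \eqref{Hardy inequality on submanifold}. Since $I$ is also at most a multiple of the deficit, a rescaling or homogeneity argument should yield the conclusion provided the right-hand side is of the form (deficit)$^{\theta}\|\nabla u\|_p^{p(1-\theta)}$. I expect this to be insufficient by itself, because the deficit can be much smaller than $\|\nabla u\|_p^p$. To handle that, I would use the standard trick from the $p\ge2$ literature of first proving the inequality with $\int|\nabla u|^p$ added on the left with a small coefficient, i.e. an improved Hardy inequality. Concretely, I would show that for some $\epsilon>0$ the deficit dominates $\epsilon\int\phi^p|\nabla v|^p$ up to a Hardy-type error, by comparing the cases $\phi|\nabla v|\ge|v\nabla\phi|$ (where the remainder is $\gtrsim\phi^p|\nabla v|^p$) and the opposite case.

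Finally, apply a weighted $L^1$ Sobolev (Maz'ya/Caffarelli--Kohn--Nirenberg) inequality in $\R^N$ with weights that are powers of $|x_k|$ to $w=|v|^{r}$, with $r$ chosen so that $\phi^{q}|v|^{q}=|u|^q$ and the resulting weights match $|x_k|^{-(N+(p-N)q/p)}$. For $q=p^*$ the weight vanishes. Subcritical $q$ follow from the critical case combined with the Hardy term by Hölder interpolation between $q=p$ (the Hardy weight $|x_k|^{-p}$) and $q=p^*$, whose exponent bookkeeping matches $N+(p-N)q/p$. The constants depend only on $N,p,q,k$, and density extends the result to $C^1_c$.
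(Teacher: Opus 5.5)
The gap lies in converting the remainder into a Sobolev norm; your first steps are fine and match the paper. The ground state $u=\phi v$, Lemma \ref{lem:elem}, and your quantity $I$ are exactly the paper's $J(v)$ from Lemma \ref{lem:gsr}, since $|v||\nabla\phi|+\phi|\nabla v|=\phi M_v$ and $\phi^p=|x_k|^{p-k}$.

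\textbf{The proposed repair is false for $1<p<2$.} Bounding the second Cauchy--Schwarz factor through $\int|\nabla u|^p$ only gives estimates of the form $(\text{deficit})^{1/2}\|\nabla u\|_p^{\cdots}$, as you note yourself. Your fix, that the deficit dominates $\epsilon\int\phi^p|\nabla v|^p$ up to an admissible error, is exactly what the Hessian degeneracy rules out. Take the half-line, and hence the half-space $k=1$ after multiplying by a cut-off in $x'$ on a much larger scale and truncating. Let $f_\delta(t)=t^{\alpha}v_\delta(t)$ with $\alpha=\frac{p-1}{p}$ and $v_\delta(t)=\min\{t^{\delta},t^{-\delta}\}$. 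The deficit equals $\frac{(\alpha+\delta)^p+(\alpha-\delta)^p-2\alpha^p}{p\delta}\approx(p-1)\alpha^{p-2}\delta$. On the other hand, $\int_0^\infty t^{p-1}|v_\delta'|^p\,dt=\frac{2}{p}\delta^{p-1}$, which is much larger than $\delta$. Where $\phi|\nabla v|\ll|v||\nabla\phi|$ the remainder is genuinely quadratic, and no case distinction recovers $|\nabla v|^p$. If instead the ``Hardy-type error'' is a multiple of $\int|u|^p|x_k|^{-p}$, it spoils the sharp constant and is not controlled by the deficit.

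\textbf{The missing idea is to close the argument within weighted norms of $v$, never passing through $\|\nabla u\|_p$.}
\begin{itemize}
\item The paper applies the weighted $L^1$ Sobolev inequality to $w=|v|^m$ with $m=p+\frac{(p-1)(q-p)}{p}$ and weight $|x_k|^A$. For $q=p^*$ this gives $m=\frac{p(N-1)}{N-p}>p$.
\item In Lemma \ref{lem:key}, the second Cauchy--Schwarz factor is bounded by $T_{s_a}(v)$ plus the term $I(v)^{2-p}T_q(v)^{p-1}$. The latter comes from H\"older with exponents $\frac{1}{2-p},\frac{1}{p-1}$, where $I(v)=\int|x_k|^A|v|^{m-1}|\nabla v|$ is the quantity being estimated.
\item Young's inequality absorbs this term, giving $I\lesssim J^{1/2}T_{s_a}^{1/2}+J^{1/p}T_q^{1/p'}$.
\item Lemma \ref{lem:L1sob}, applied with $r=s/m$ for $s\in\{s_a,q\}$, gives $T_s^{m/s}\lesssim I$.
\item Homogeneity in $\mathcal Z=\max\{T_{s_a}^{1/s_a},T_q^{1/q}\}$ then yields $\mathcal Z^p\lesssim J$.
\end{itemize}
This treats every $q\in(p,p^*]$ directly.

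\textbf{Your final interpolation step also fails.} H\"older between the Hardy weight at $q=p$ and $L^{p^*}$ bounds the right-hand side by a power of $\int|u|^p|x_k|^{-p}$ times a power of $\|u\|_{p^*}$. The Hardy term is not controlled by the deficit, because the constant is sharp. So you only obtain $(\text{deficit})^{1-\theta}\|\nabla u\|_p^{p\theta}$, not an HSM inequality.
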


The above theorem completes the remaining open range $1<p<2$ in the HSM inequality for the half-space. 

\begin{corollary}[Half-space HSM inequality]
Let $N\geq 2$, $1<p<2$, and $p<q\leq p^*=\frac{Np}{N-p}$. For any $x \in \mathbb{R}^{N}_{+}$, we write $x= (x_{N-1}, x_{N}) \in \mathbb{R}^{N-1} \times \mathbb{R}_{+}$ Then there exists a constant $C=C(N,p,q)>0$ such that, for every
$u\in C_c^1(\mathbb{R}^N_+)$,
\begin{align*}
\int_{\mathbb{R}^N_+}|\nabla u(x)|^p\,dx
-\left(\frac{p-1}{p}\right)^p
\int_{\mathbb{R}^N_+}\frac{|u(x)|^p}{x_N^p}\,dx \geq
C\left(
\int_{\mathbb{R}^N_+}
\frac{|u(x)|^q}
{x_N^{\,N+(p-N)\frac{q}{p}}}\,dx
\right)^{\frac{p}{q}}.
\end{align*}
In particular, at the critical exponent $q=p^*$, the weight on the
right-hand side disappears and
\begin{align*}
\int_{\mathbb{R}^N_+}|\nabla u(x)|^p\,dx
-\left(\frac{p-1}{p}\right)^p
\int_{\mathbb{R}^N_+}\frac{|u(x)|^p}{x_N^p}\,dx \geq
C\left(
\int_{\mathbb{R}^N_+}|u(x)|^{p^*}\,dx
\right)^{\frac{p}{p^*}}.
\end{align*}
\end{corollary}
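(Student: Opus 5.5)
The plan is to deduce the corollary directly from Theorem~\ref{Theorem: HSM Plane Singularity} with codimension $k=1$. With $k=1$, the singular set $K$ is a hyperplane, and the restriction $1<p<2$ gives $p>1=k$. So the theorem applies to functions in $C^1_c(\mathbb{R}^N\setminus K)$. The only work is matching coordinates, checking admissibility of the extended function, and comparing constants.

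First, fix the coordinates. In the theorem the singular variable is the first block $x_k\in\mathbb{R}^k$, while in the corollary it is the last coordinate $x_N$. Let $T:\mathbb{R}^N\to\mathbb{R}^N$ be the linear isometry swapping the first and last coordinates. Given $u\in C^1_c(\mathbb{R}^N_+)$, define $\tilde u$ on $\mathbb{R}^N$ by $\tilde u=u\circ T$ on $T^{-1}(\mathbb{R}^N_+)=\{x_1>0\}$ and $\tilde u=0$ elsewhere. Since $\supp u$ is a compact subset of the open half-space, it lies at positive distance from $\{x_N=0\}$. Hence $\tilde u$ is $C^1$ and $\supp\tilde u$ is a compact subset of $\mathbb{R}^N\setminus K$ with $K=\{x_1=0\}$. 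Therefore $\tilde u\in C^1_c(\mathbb{R}^N\setminus K)$ is admissible in Theorem~\ref{Theorem: HSM Plane Singularity} with $k=1$.

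Next, apply the theorem to $\tilde u$ and transport each term back by the change of variables $y=Tx$. This map has unit Jacobian, satisfies $|\nabla\tilde u(x)|=|\nabla u(Tx)|$, and has $|x_1|=y_N$ on the support. Because $\tilde u$ vanishes outside $\{x_1>0\}$, every integral over $\mathbb{R}^N$ becomes the corresponding integral over $\mathbb{R}^N_+$. For the constants, $\left|\frac{1-p}{p}\right|^p=\left(\frac{p-1}{p}\right)^p$, and the weight exponent $N+(p-N)\frac{q}{p}$ is unchanged. The constant $C(N,p,q,1)$ depends only on $N,p,q$, which gives the first inequality.

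The critical case $q=p^*$ follows in the same way from the second display of the theorem, or directly by noting that $N+(p-N)\frac{p^*}{p}=N-N=0$. There is no real obstacle here. The only point requiring care is the support argument, which ensures the zero extension is $C^1$ and vanishes near $K$, so that the case $p>k$ of the theorem is the relevant one.
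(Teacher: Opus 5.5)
Your proposal is correct and is the same argument the paper intends. The paper only remarks that the case $k=1$ of Theorem~\ref{Theorem: HSM Plane Singularity} (where $p>k$), restricted to one component of $\mathbb{R}^N\setminus K$, gives the half-space inequality with $\left|\frac{1-p}{p}\right|^p=\left(\frac{p-1}{p}\right)^p$; your coordinate swap and zero-extension argument spells out exactly that step.
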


The proof of Theorem \ref{Theorem: HSM Plane Singularity} is based on two main ingredients. First, in the subquadratic range $1<p<2$, we apply a quantitative remainder estimate for the Hardy inequality. This is obtained using a pointwise convexity estimate for $|X|^p$ together with the ground-state transformation. Second, we prove a weighted Sobolev inequality and use it to estimate the remainder term in terms of the weighted $L^q$ norm appearing in Theorem \ref{Theorem: HSM Plane Singularity}. A cut-off argument is used in the case $p<k$ to handle functions which may not vanish near the singular set.

\medskip
\begin{remark}
For the sake of completeness, let us notice that the fractional version of the Hardy--Sobolev--Maz'ya inequality, that is
$$
\int_{\mathbb{R}^N}\int_{\mathbb{R}^N}\frac{|u(x)-u(y)|^p}{|x-y|^{N+sp}}\,dy\,dx-\mathcal{C}_{N,s,p,k}\int_{\mathbb{R}^N}\frac{|u(x)|^p}{|x_k|^{sp}}\,dx\ge C\left(\int_{\mathbb{R}^N}\frac{|u(x)|^q}{|x_k|^{N+(sp-N)\frac{q}{p}}}\,dx\right)^{p/q},
$$
where $\mathcal{C}_{N,s,p,k}$ is the sharp constant in a corresponding fractional Hardy inequality and $0<s<1$, $sp<N$, $p<q\le\tfrac{Np}{N-sp}$, has been recently established for $1<p<2$ by Dyda and Kijaczko \cite{MR4708667} for the case of the half-space and by Kijaczko and Sahu \cite{VivekCCM2026} in the whole generality of singularities on flat submanifolds. Somewhat unexpectedly, the fractional case was proven earlier than the local case derived in this article. For other papers dealing with fractional Hardy--Sobolev--Maz'ya inequalities, we refer to \cite{MR2910984,MR3803664,MR2823046}.
\end{remark}

\subsection{Hardy–Sobolev–Maz'ya inequalities on general domains}
HSM inequalities can also be considered on arbitrary domains $\Omega\subset\R^N$. To describe this, we first define the so-called pseudodistance (or mean distance), originally introduced by Davies \cite{MR757118} for $p=2$ and generalized to all $p>1$ by Tidblom \cite{MR2052402}, given by the formula
$$
D_{\Omega,p}(x):=\left(\frac{\sqrt{\pi}\,\Gamma\left(\frac{N+p}{2}\right)}{\Gamma\left(\frac{p+1}{2}\right)\Gamma\left(\frac{N}{2}\right)}\left|\mathbb{S}^{N-1}\right|^{-1}\int_{\mathbb{S}^{N-1}}d_{e}(x)^{-p}d\sigma(e)\right)^{-\frac{1}{p}},
$$
where, for $e\in\mathbb{S}^{N-1}$, 
\begin{equation}\label{def: de}
d_{e}(x):=\inf\left\{|t|\colon x+te\in\Om^c\right\}.
\end{equation}
For convex $\Om$, we always have $D_{\Om,p}(x)\le\text{dist}(x,\Om^c)$. For any $\Om\subsetneq\R^N$ and $p>1$, there is a Hardy inequality
\begin{equation}\label{Hardy domain}
\int_{\Om}|\nabla u(x)|^p\,dx\ge\left(\frac{p-1}{p}\right)^p\int_{\Om}\frac{|u(x)|^p}{D_{\Om,p}(x)^p}\,dx,\quad u\in C_c^1(\Om).
\end{equation}
It turns out that \eqref{Hardy domain} enjoys a Sobolev-type improvement. Frank and Loss \cite{FrankLoss2012} established a HSM inequality for arbitrary domains and $2\le p<N$. They proved that there exists a constant $C=C(p,N)>0$, independent on $\Om$, such that with $q=\tfrac{Np}{N-p}$,
\begin{equation}\label{HSM bounded domains}
\int_{\Om}|\nabla u(x)|^p\,dx-\left(\frac{p-1}{p}\right)^p\int_{\Om}\frac{|u(x)|^p}{D_{\Om,p}(x)^p}\,dx\ge C\left(\int_{\Om}|u(x)|^q\,dx\right)^{p/q},
\end{equation}
for all $u\in C_c^1(\Om)$. As in the case of the whole space, the question of whether inequality \eqref{HSM bounded domains} holds for $1<p<2$ remained an open problem. Our second main result gives an affirmative answer to this question and reads as follows. 

\begin{theorem}\label{HSM domains p<2}
Let $\Om\subset\R^N$ be any proper domain. Let $1<p<2$, $N\ge 2$, $q=\tfrac{Np}{N-p}$, and  $u\in C_c^1(\Om)$. There exists a constant $C=C(p,N)>0$ such that 
\begin{equation}\label{HSM bounded domains 1<p<2}
\int_{\Om}|\nabla u(x)|^p\,dx-\left(\frac{p-1}{p}\right)^p\int_{\Om}\frac{|u(x)|^p}{D_{\Om,p}(x)^p}\,dx\ge C\left(\int_{\Om}|u(x)|^q\,dx\right)^{p/q}.
\end{equation}
In particular, if $\Om$ is convex, then also 
$$
\int_{\Om}|\nabla u(x)|^p\,dx-\left(\frac{p-1}{p}\right)^p\int_{\Om}\frac{|u(x)|^p}{\dist(x,\Om^c)^p}\,dx\ge C\left(\int_{\Om}|u(x)|^q\,dx\right)^{p/q}.
$$
\end{theorem}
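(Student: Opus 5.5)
We follow the Frank--Loss scheme: reduce to one-dimensional Hardy inequalities on lines in each direction $e\in\mathbb{S}^{N-1}$, and replace their $p\ge 2$ convexity step by the subquadratic remainder estimate. The subquadratic inequality we use is, for $1<p<2$ and $a,b\in\R^N$,
\[
|a+b|^p-|a|^p-p|a|^{p-2}a\cdot b\ \ge\ c_p\,\frac{|b|^2}{(|a|+|b|)^{2-p}}.
\]

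\textbf{Step 1 (one-dimensional remainder).} Take an interval $I=(0,\ell)$ or the half-line and $d(t)=\min(t,\ell-t)$. For a nonnegative $v\in C_c^1(I)$, we write $v=w\,d^{(p-1)/p}$ (ground state transformation) and apply the convexity estimate. This should give
\[
\int_I|v'|^p-\Big(\tfrac{p-1}{p}\Big)^p\int_I\frac{v^p}{d^p}\ \ge\ c\int_I \frac{d^{p-1}|w'|^2\,w^{p-2}\ \text{-type term}}{(\dots)^{2-p}},
\]
up to a harmless nonnegative boundary contribution at the midpoint, which is handled as in Frank--Loss. We then interpolate this remainder against the Hardy terms by Hölder, so that it dominates a quantity such as $\int_I|(w^{\gamma})'|\,d^{\beta}$ with a suitable power. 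The cleanest choice is to aim, as in Frank--Loss for $p\ge2$, at a lower bound of the form
\[
\text{deficit}_I(v)\ \ge\ c\,\frac{\big(\sup_I |v|\big)^{?}}{\dots}
\]
or at an $L^1$-gradient-type quantity of a power of $|v|$.

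\textbf{Step 2 (average over directions).} For $x\in\Om$ and $e\in\mathbb{S}^{N-1}$, the line $x+te$ intersected with $\Om$ splits into intervals, and on each of them $d_e$ is the distance to the endpoints. Applying Step 1 on every line and integrating over $x$ and over $e$ recovers, via the identity $\int_{\mathbb{S}^{N-1}}|e\cdot\xi|^p\,d\sigma=c\,|\xi|^p$ and the normalization in $D_{\Om,p}$, the left side of \eqref{HSM bounded domains 1<p<2}. This bookkeeping is exactly Tidblom's proof of \eqref{Hardy domain}. The output is
\[
\text{LHS}\ \ge\ c\int_{\mathbb{S}^{N-1}}\int_\Om R_e(u)\,dx\,d\sigma(e),
\]
where $R_e$ denotes the directional remainder.

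\textbf{Step 3 (Sobolev from the remainder).} Frank and Loss show that the averaged directional remainder controls $\|u\|_q^p$, via an estimate of the form
\[
\|u\|_q^p\lesssim\big(\text{remainder}\big)^{\theta}\big(\|\nabla u\|_p^p\big)^{1-\theta},
\]
which they obtain from a weak-type bound combined with a rearrangement-free one-dimensional $L^\infty$ bound. The remaining $\|\nabla u\|_p^p$ factor is then absorbed. If the deficit is at least $\varepsilon\|\nabla u\|_p^p$, we are done by the plain Sobolev inequality. Otherwise the Hardy term nearly equals $\|\nabla u\|_p^p$, and the interpolated bound closes the argument. To get the interpolation we use the Hölder splitting
\[
|b|^p\le\Big(\frac{|b|^2}{(|a|+|b|)^{2-p}}\Big)^{p/2}(|a|+|b|)^{(2-p)p/2}.
\]
This converts the subquadratic remainder into a $p$-power remainder, at the cost of a factor $\big(\int|a|^p+|b|^p\big)^{(2-p)/2}$, which is comparable to $\|\nabla u\|_p^p+\text{Hardy}\lesssim\|\nabla u\|_p^p$.

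\textbf{Main obstacle.} The hardest step is this last interpolation: producing, from the degenerate remainder, the precise one-dimensional quantity used by Frank--Loss without destroying uniformity in $\Om$. I would try first to prove the directional bound
\[
\sup_t |v(t)|^{\alpha}\lesssim \big(\text{1D deficit}\big)^{s}\Big(\int|v'|^p\Big)^{1-s}
\]
on each interval, and then feed it into their Sobolev argument unchanged. The convex case finally follows from $D_{\Om,p}\le\dist(x,\Om^c)$.
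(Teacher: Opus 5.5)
Your overall frame (ground state substitution, the subquadratic convexity bound, slicing along lines, averaging over directions, and $D_{\Om,p}\le\dist(x,\Om^c)$ in the convex case) is right. But the step that carries the proof is missing: Step 1 ends with undetermined exponents, and the two substitutes you propose for it do not work.

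First, the closing argument in Step 3 does not close. Let $D$ be the Hardy deficit and $G=\|\nabla u\|_p^p$. A bound $\|u\|_q^p\lesssim D^{\theta}G^{1-\theta}$ equals $D\,(G/D)^{1-\theta}$, so it gives nothing in the case $D<\varepsilon G$. That case cannot be avoided, since $G/D$ can be arbitrarily large (e.g. in a half-space, where the Hardy constant is sharp). Your H\"older splitting produces exactly such a bound, $\int|b|^p\lesssim J^{p/2}G^{(2-p)/2}\lesssim D^{p/2}G^{(2-p)/2}$. So converting the degenerate remainder into a $p$-power remainder and then running a $p\ge 2$ argument cannot succeed.

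Second, the directional bound in your last paragraph is false. For $f\in C_c^1((-1,1))$ and $f_\ell(t)=f(t/\ell)$ on $(-\ell,\ell)$, $\sup|f_\ell|$ is fixed, while the one-dimensional deficit and $\int|f_\ell'|^p$ both scale like $\ell^{1-p}$. Letting $\ell\to\infty$ (or dilating on a half-line, which occurs for the half-space) violates $\sup|v|^{\alpha}\lesssim(\text{deficit})^{s}(\int|v'|^p)^{1-s}$ for every $\alpha,s$.

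Also, the midpoint term is not harmless. The paper keeps the contribution $2\big(\frac{p-1}{p}\big)^{p-1}|g(0)|^p$ coming from the kink of $(1-|s|)^{(p-1)/p}$, and uses it to control $|f(0)|=|g(0)|$.

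What is needed, as in Frank--Loss, is a one-dimensional bound that is linear in the deficit and carries an $L^q$ factor with exactly compensating scaling: $\sup|f|^{q(p-1)+p}\le C_{p,q}\,D(f)\big(\int_{-1}^{1}|f|^q\big)^{p-1}$ for $f\in C_c^1((-1,1))$. Here $D(f)$ is the deficit with weight $(1-|s|)^{-p}$, and $(\int|f_\ell|^q)^{p-1}$ scales like $\ell^{p-1}$, cancelling $\ell^{1-p}$. The paper proves this directly from the mixed energy $J(g)$, with $f=\omega g$ and $\mu=q/p'$:
\begin{enumerate}[label=(\roman*)]
\item bound $|f(t)|^{\mu+1}$ by $|f(0)|^{\mu+1}+(\mu+1)\int r^{\beta}|g|^{\mu}|g'|$;
\item estimate this integral by Cauchy--Schwarz against $J_\pm(g)$, using the weight $M_g^{(p-2)/2}$;
\item split $M_g^{2-p}$ by subadditivity and treat both pieces by H\"older with exponents $\frac{1}{2-p},\frac{1}{p-1}$; this is where $(\int|f|^q)^{p-1}$ enters, together with a weighted $L^1$ Hardy inequality with a boundary term at $0$;
\item absorb by Young, then bound $|f(0)|^{\mu+1}$ by $C(|g(0)|+J(g)^{1/p})(\int|f|^q)^{1/p'}$ via a two-case argument.
\end{enumerate}

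Applied on every coordinate line of a fixed frame, this gives $|u(x)|\le\big(C\,g_j h_j^{p-1}\big)^{1/(q(p-1)+p)}$. Here $g_j(\widetilde{x}_j)$ is the directional deficit along the $x_j$-line and $h_j(\widetilde{x}_j)=\int|u|^q\,dx_j$. Loomis--Whitney and H\"older then yield $\|u\|_q^q\lesssim\|u\|_q^{Nq/((N-1)p')}\prod_j\|g_j\|_{L^1}^{1/((N-1)p)}$. The power of $\|u\|_q$ can be divided out, unlike your factor $\|\nabla u\|_p^p$, leaving $\|u\|_q^p\lesssim\sum_j\|g_j\|_{L^1}$. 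Only at this point is the inequality averaged over $\SO(N)$ to produce $D_{\Om,p}$.
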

Notice that, since $p<2$, Theorem \ref{HSM domains p<2} also covers the planar case $N=2$, which is excluded in \eqref{HSM bounded domains}.

\smallskip
\noindent\textbf{Organization of the article.}
The paper is organized as follows. In Section \ref{Section2}, we recall Hardy inequality with a quantitative remainder in the range $1<p<2$. In Section \ref{Section3}, we prove the weighted $L^1$ Sobolev inequality and establish the estimates needed to control the remainder term. We also prove the cut-off result near the singular set. In Section \ref{Section4}, we combine these estimates to prove Theorem \ref{Theorem: HSM Plane Singularity}. Finally, in Section \ref{Section5}, we prove Theorem \ref{HSM domains p<2}.

\section{A Hardy inequality with a quantitative remainder}\label{Section2}



We shall require the following quantitative lower bound for the
increment of the map $X\mapsto |X|^p$ in the range $1<p<2$. The estimate
captures a second-order contribution to the convexity of this map, with
a remainder that is suitable for the degeneracy of the Hessian near the
origin. Such estimates (with various constants) are known in the literature, see \cite{MR1007505,MR2400106}. An estimate of this type with a constant $\tfrac{p(p-1)}{2}$ was obtained in \cite{Debdip2026}.

\begin{lemma}\label{lem:elem}
Let $N\ge1$, $1<p\le2$, $a,b\in\R^N$, and set $|a|^{p-2}a:=0$ if $a=0$. Then
\begin{equation}\label{eq:elem}
\mathcal R(a,b):=|a+b|^p-|a|^p-p|a|^{p-2}a\cdot b\ \ge\ \frac{p(p-1)}2\,|b|^2(|a|+|b|)^{p-2},
\end{equation}
where the right-hand side is interpreted as $0$ if $b=0$.
\end{lemma}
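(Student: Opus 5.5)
We reduce the inequality to a one-dimensional integral identity via Taylor's formula with integral remainder, and then bound the resulting integrand from below. Fix $a,b\in\R^N$ with $b\neq0$, and set $f(t):=|a+tb|^p$ for $t\in[0,1]$. Since $p>1$, the function $f$ is $C^1$ with $f'(t)=p|a+tb|^{p-2}(a+tb)\cdot b$, using the convention $|0|^{p-2}0=0$. In particular $f'(0)=p|a|^{p-2}a\cdot b$. At points where $a+tb\neq0$, $f$ is $C^2$ with
\[
f''(t)=p|a+tb|^{p-2}\Big(|b|^2+(p-2)\frac{((a+tb)\cdot b)^2}{|a+tb|^2}\Big)\ \ge\ p(p-1)|b|^2|a+tb|^{p-2},
\]
where we used $p-2\le0$ and Cauchy--Schwarz. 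The line $t\mapsto a+tb$ meets the origin for at most one value $t_0$. Near $t_0$ we have $|f'(t)|\le p|b|^p|t-t_0|^{p-1}$, so $f'$ is absolutely continuous on $[0,1]$ and $f''\in L^1$, since $|t-t_0|^{p-2}$ is integrable for $p>1$. Hence
\[
\mathcal R(a,b)=f(1)-f(0)-f'(0)=\int_0^1(1-t)f''(t)\,dt\ \ge\ p(p-1)|b|^2\int_0^1(1-t)|a+tb|^{p-2}\,dt .
\]

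The key pointwise bound comes from $|a+tb|\le|a|+t|b|\le|a|+|b|$ for $t\in[0,1]$. Because $p-2\le0$, this gives $|a+tb|^{p-2}\ge(|a|+|b|)^{p-2}$, which remains valid, with value $+\infty$, at $t_0$. Since $\int_0^1(1-t)\,dt=\tfrac12$, we obtain
\[
\mathcal R(a,b)\ge\frac{p(p-1)}2|b|^2(|a|+|b|)^{p-2}.
\]
This is exactly \eqref{eq:elem}. For $b=0$ both sides vanish, consistent with the stated convention.

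The main obstacle is justifying the Taylor formula when $a+tb$ passes through $0$, where $f''$ blows up for $p<2$. This is handled by the absolute continuity of $f'$ noted above. One can check it directly in the one bad case where $a$ is parallel to $b$ and $t_0\in[0,1]$: there $f(t)=|b|^p|t-t_0|^p$ is explicitly $C^1$ with an integrable second derivative. Alternatively, one can prove the inequality with $a$ replaced by $a+\varepsilon e$, for some $e$ not parallel to $b$, and let $\varepsilon\to0$. Both sides are continuous in $a$: the term $|a|^{p-2}a$ is continuous since $p>1$, and $(|a|+|b|)^{p-2}$ is continuous since $b\neq0$.
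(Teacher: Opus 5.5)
Your proof is correct and complete. The paper does not prove Lemma~\ref{lem:elem} at all; it quotes the estimate from the literature, crediting this constant $\frac{p(p-1)}{2}$ to a recent reference, so there is no argument in the paper to compare yours against. Your route combines three ingredients:
\begin{itemize}
\item Taylor's formula with integral remainder for $f(t)=|a+tb|^p$;
\item the second-derivative bound $f''(t)\ge p(p-1)|b|^2|a+tb|^{p-2}$, valid for $p\le2$;
\item the monotonicity estimate $|a+tb|^{p-2}\ge(|a|+|b|)^{p-2}$.
\end{itemize}
This is the standard way this constant arises. The constant is also optimal: take $b=\varepsilon a$ with $a\neq0$ and let $\varepsilon\to0$; both sides then behave like $\frac{p(p-1)}{2}\varepsilon^2|a|^p$.

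Two minor points of presentation. First, absolute continuity of $f'$ on $[0,1]$ does not follow from the bound $|f'(t)|\le p|b|^p|t-t_0|^{p-1}$ by itself. What gives it is that $f'$ is continuous on $[0,1]$, of class $C^1$ on $[0,1]\setminus\{t_0\}$, and satisfies $f''\in L^1(0,1)$. You do establish all three facts, most transparently through the explicit formula $f(t)=|b|^p|t-t_0|^p$ in your last paragraph, so the argument should be phrased that way. Second, your perturbation alternative needs a direction $e$ not parallel to $b$, and none exists when $N=1$. This is harmless: you can embed $\R$ into $\R^2$, since the inequality involves only norms and inner products, or simply rely on your direct check. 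It is still worth noting, because the lemma is stated for $N\ge1$.
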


The preceding estimate will be useful in obtaining a remainder term for
the Hardy inequality. To apply this estimate to the gradient of $u$, we
use the ground-state transformation $u(x)=|x_k|^{-\frac{k-p}{p}}v(x)$. This transformation naturally leads to a quantity involving both
$\frac{|v(x)|}{|x_k|}$ and $|\nabla v(x)|$. We therefore introduce the
following mixed energy, which will be used to estimate the remainder
term.

\begin{definition}[Mixed energy]\label{def:J}
Let $N \geq 2, \, 1 \leq k<N$, $1<p<2$, and let
\begin{equation*}
    K=\left\{x=(x_k,x_{N -k})\in\mathbb{R}^k\times\mathbb{R}^{N-k} :x_k=0 \right\}.
\end{equation*}
For $v\in C_c^1(\mathbb{R}^N\setminus K)$, let $Z_v:=\{x\in\mathbb{R}^N:\nabla v(x)\neq0\}$. Then $Z_v$ is an open set. We define
\begin{equation}\label{Defn: M_v}
    M_v(x):= \left| \frac{k-p}{p} \right|
    \frac{|v(x)|}{|x_k|}
    +|\nabla v(x)|,
\end{equation}
and introduce the mixed energy
\begin{equation}\label{Defn: J(v)}
    J(v):=
    \int_{Z_v}
    |x_k|^{p-k}|\nabla v(x)|^2
    M_v(x)^{p-2}\,dx.
\end{equation}
\end{definition}

We now use the mixed energy defined above to obtain a remainder term in
the Hardy inequality. For this, we define the positive function $\omega(x) = |x_{k}|^{-\frac{k-p}{p}}$, which is a solution to the Euler--Lagrange equation associated with the
Hardy inequality \eqref{Hardy inequality on submanifold},
\begin{equation}\label{local Euler--Lagrange equation}
    - \operatorname{div} \big( |\nabla \omega(x)|^{p-2} \nabla \omega(x) \big)
    = \left| \frac{k-p}{p} \right|^{p} \frac{\omega(x)^{p-1}}{|x_{k}|^{p}}.
\end{equation}
We write $u=\omega v$ and use the quantitative estimate from
Lemma \ref{lem:elem} together with integration by parts. This gives the following well known Hardy inequality with a remainder.

\begin{lemma}[Hardy inequality with a remainder]\label{lem:gsr}
Let $N \geq 2, \, 1 \leq k<N$, and $1<p<2$. Then, for any $u\in C^{1}_{c}(\mathbb{R}^{N})$ when $p<k$, and $u\in C^{1}_{c}(\mathbb{R}^{N}\setminus K)$ when $p>k$, the following Hardy inequality with a remainder holds:
\begin{equation}
      \int_{\mathbb{R}^{N}} |\nabla u(x)|^{p} \,dx
-
\left| \frac{k-p}{p}\right|^{p}
\int_{\mathbb{R}^{N}}
\frac{|u(x)|^{p}}{|x_{k}|^p}\,dx 
    \geq
    \frac{p(p-1)}{2}J(v),
\end{equation}
where $J(v)$ is defined in \eqref{Defn: J(v)}, and  $v:=|x_k|^{ \frac{k-p}{p} } u$.
\end{lemma}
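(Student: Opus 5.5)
We write $\alpha:=\frac{k-p}{p}$, so that $\omega=|x_k|^{-\alpha}$, $u=\omega v$, and $\nabla\omega=-\alpha|x_k|^{-\alpha-2}(x_k,0)$, hence $|\nabla\omega|=|\alpha|\,|x_k|^{-\alpha-1}$. We first treat $u\in C^1_c(\R^N\setminus K)$, so that $v\in C^1_c(\R^N\setminus K)$ and every integration by parts below is legitimate. Pointwise,
\[
\nabla u=v\nabla\omega+\omega\nabla v,
\]
and we apply Lemma~\ref{lem:elem} with $a=v\nabla\omega$ and $b=\omega\nabla v$:
\[
|\nabla u|^p\ \ge\ |v|^p|\nabla\omega|^p+p|v|^{p-2}v\,|\nabla\omega|^{p-2}\nabla\omega\cdot\omega\nabla v+\frac{p(p-1)}{2}\,\omega^2|\nabla v|^2\bigl(|v||\nabla\omega|+\omega|\nabla v|\bigr)^{p-2}.
\]
On the set where $\nabla v=0$ the last term is interpreted as $0$. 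Using the formula for $|\nabla\omega|$,
\[
|v||\nabla\omega|+\omega|\nabla v|=\omega\Bigl(|\alpha|\tfrac{|v|}{|x_k|}+|\nabla v|\Bigr)=\omega M_v.
\]
The remainder term therefore equals $\frac{p(p-1)}2\,\omega^p|\nabla v|^2M_v^{p-2}$ on $Z_v$. Since $\omega^p=|x_k|^{p-k}$, integrating gives exactly $\frac{p(p-1)}2J(v)$.

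For the middle term, note that $p|v|^{p-2}v\nabla v=\nabla|v|^p$. Integrating by parts (valid since $|v|^p\in C^1_c$ for $p>1$ and $\omega$ is smooth away from $K$), we get
\[
\int\omega|\nabla\omega|^{p-2}\nabla\omega\cdot\nabla|v|^p=-\int|v|^p\operatorname{div}\bigl(\omega|\nabla\omega|^{p-2}\nabla\omega\bigr).
\]
The divergence splits as
\[
\operatorname{div}\bigl(\omega|\nabla\omega|^{p-2}\nabla\omega\bigr)=|\nabla\omega|^p+\omega\operatorname{div}\bigl(|\nabla\omega|^{p-2}\nabla\omega\bigr),
\]
and by \eqref{local Euler--Lagrange equation} the second piece equals $-|\alpha|^p\omega^p|x_k|^{-p}$. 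The two $|v|^p|\nabla\omega|^p$ contributions cancel, and what remains is $|\alpha|^p\int|v|^p\omega^p|x_k|^{-p}=|\alpha|^p\int|u|^p|x_k|^{-p}$. This yields the claim for $u$ supported away from $K$, which covers the case $p>k$.

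For $p<k$ and $u\in C^1_c(\R^N)$ we approximate. Take $u_\varepsilon=u\,\eta(|x_k|/\varepsilon)$, where $\eta$ is a cutoff with $\eta=0$ near $0$ and $\eta=1$ on $[1,\infty)$. Since $p<k$, the tube $\{|x_k|<\varepsilon\}$ has measure of order $\varepsilon^k$, so
\[
\int|\nabla\eta_\varepsilon|^p|u|^p\lesssim\varepsilon^{k-p}\to0.
\]
Also $|x_k|^{-p}$ is locally integrable, so the left-hand side for $u_\varepsilon$ converges to that for $u$. On the right-hand side, $v_\varepsilon\to v$ and $\nabla v_\varepsilon\to\nabla v$ pointwise off $K$, and the integrand of $J$ is nonnegative, so Fatou's lemma gives $J(v)\le\liminf J(v_\varepsilon)$. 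One care point is that $Z_{v_\varepsilon}$ differs from $Z_v$, but pointwise $\liminf$ of the integrands dominates the integrand for $v$ on $Z_v$, which is all Fatou's lemma needs.

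The main obstacle is this last approximation step. In the first part the only delicate point is checking that the integration by parts has no boundary contributions, which is immediate from the compact support away from $K$.
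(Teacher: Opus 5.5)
Your proof is correct and follows the route the paper indicates. You substitute $u=\omega v$, apply Lemma~\ref{lem:elem} with $a=v\nabla\omega$ and $b=\omega\nabla v$ (so that $|a|+|b|=\omega M_v$ and $\omega^p=|x_k|^{p-k}$), and integrate by parts using the Euler--Lagrange equation \eqref{local Euler--Lagrange equation}. The only difference is in the case $p<k$ with $u\in C^1_c(\R^N)$: you handle it inside the lemma via a cutoff and Fatou's lemma, reading $J(v)$ as an integral off $K$ since $v$ need not be $C^1$ across $K$, whereas the paper only ever applies the lemma to functions supported away from $K$ and does this reduction at the level of the theorem through Lemma~\ref{lem:cutoff}.
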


The remainder term from Lemma \ref{lem:gsr} is expressed in terms of
$J(v)$. In order to connect this quantity with the weighted Sobolev
norms that appear in Theorem \ref{Theorem: HSM Plane Singularity}, we introduce the following weighted integrals. The choice of the exponent $b(s)$ is made according to the scaling of the problem.

\begin{definition}\label{remark on T_q}
    For $s>0$ and $v \in C^{1}_{c}(\mathbb{R}^{N} \setminus K)$, we define
\begin{equation}\label{Defn: T_s}
    T_s(v):=
    \int_{\mathbb{R}^{N}} |x_k|^{b(s)}|v(x)|^s\, d x,
\end{equation}
where 
\begin{equation}\label{b(s)}
b(s):= \frac{(N-k)s}{p} - N.
\end{equation}
If we write $v(x)= |x_{k}|^{\frac{k-p}{p}} u(x)$, where $u \in C^{1}_{c}(\mathbb{R}^{N} \setminus K) $, then for any $q>1$,
\begin{equation*}
    T_{q}(v) = \int_{\mathbb{R}^{N}} \frac{|u(x)|^{q}}{|x_{k}|^{N+ (p-N)\frac{q}{p}}}\,dx.
\end{equation*}
\end{definition}

\section{A weighted \texorpdfstring{$L^1$}{L1} Sobolev inequality and some key estimates}\label{Section3}

In this section, we establish the weighted Sobolev estimates that will
be used to prove Theorem \ref{Theorem: HSM Plane Singularity}. We use a weighted $L^1$ Hardy inequality and combine it with the endpoint Sobolev inequality to obtain
a weighted Sobolev inequality. We then record some relations between the
parameters used in the proof and establish the key estimate that connects
the mixed energy $J(v)$ with the weighted $L^s$ quantities introduced
in the previous section. Finally, we prove a cut-off result for the case
$p<k$, which allows us to reduce the proof of Theorem \ref{Theorem: HSM Plane Singularity} to functions supported away from the singular set $K$.

\smallskip

We first recall a weighted $L^1$ Hardy inequality which will be used to obtain the weighted Sobolev inequality below. The weight is chosen according to the codimension of the singular set $K$. This result is well known; for its proof and generalizations see for example \cite{MR4952332}. 

\begin{lemma}[$L^1$ Hardy inequality in codimension $k$]\label{lem:L1H}
Let $A\in\mathbb{R}$ satisfy $ A+k-1>0$. Then, for every $w\in C^{1}_c(\mathbb{R}^{N} \setminus K)$,
\begin{equation}
    \int_{\mathbb{R}^{N}} |x_k|^{A-1}|w(x)|\,dx
    \leq
    \frac{1}{A+k-1}
    \int_{\mathbb{R}^{N}} |x_k|^A|\nabla w(x)|\,dx.
\end{equation}
\end{lemma}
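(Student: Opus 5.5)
The plan is to prove the inequality by integration by parts against the vector field $F(x)=|x_k|^{A-2}x_k$, viewed as a field on $\R^N$ with vanishing last $N-k$ components. Everything happens on the support of $w$, which is a compact subset of $\R^N\setminus K$, so $F$ is smooth there. No boundary terms arise, and the singularity at $K$ plays no role.

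First compute the divergence. Since $F$ depends only on $x_k\in\R^k$, we have
\[
\operatorname{div}F=\operatorname{div}_{x_k}\bigl(|x_k|^{A-2}x_k\bigr)=(A-2)|x_k|^{A-2}+k|x_k|^{A-2}=(A+k-2)|x_k|^{A-2}.
\]
This is not quite the weight we want. The cleaner choice is $F=|x_k|^{A-1}\,\frac{x_k}{|x_k|}=|x_k|^{A-2}x_k$ applied to $|w|$, but the bookkeeping has to be rechecked. Indeed $\operatorname{div}(|x_k|^{\beta}x_k)=(\beta+k)|x_k|^{\beta}$. To obtain $|x_k|^{A-1}$ we take $\beta=A-1$, so that $F=|x_k|^{A-1}x_k$ and $\operatorname{div}F=(A+k-1)|x_k|^{A-1}$, with $|F|=|x_k|^{A}$. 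This matches the statement exactly.

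Next, to handle the lack of smoothness of $|w|$, apply the identity to $\phi_\varepsilon=\sqrt{w^2+\varepsilon^2}-\varepsilon$. This function lies in $C^1_c(\R^N\setminus K)$ and satisfies $|\nabla\phi_\varepsilon|\le|\nabla w|$. By the divergence theorem,
\[
(A+k-1)\int|x_k|^{A-1}\phi_\varepsilon\,dx=-\int \phi_\varepsilon\cdot(-)\!\!\!\!\quad\,\operatorname{div}F\ \text{rewritten as}\ -\int F\cdot\nabla\phi_\varepsilon\,dx\le\int|x_k|^{A}|\nabla w|\,dx .
\]
The constant $A+k-1$ is positive by hypothesis, so dividing by it preserves the direction of the inequality. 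Letting $\varepsilon\to0$ by monotone convergence gives the claim.

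There is no real obstacle here. The only care needed is the correct exponent in the vector field, as worked out above, and the regularization of $|w|$. Alternatively, one can simply use that $|w|$ is Lipschitz with $|\nabla|w||\le|\nabla w|$ almost everywhere, and integrate by parts directly.
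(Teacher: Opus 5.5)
Your proof is correct and complete. With $F(x)=|x_k|^{A-1}x_k$ you get $\operatorname{div}F=(A+k-1)|x_k|^{A-1}$ and $|F|=|x_k|^{A}$ on $\R^N\setminus K$. The regularization $\phi_\varepsilon=\sqrt{w^2+\varepsilon^2}-\varepsilon$ has the same support as $w$, satisfies $|\nabla\phi_\varepsilon|\le|\nabla w|$ and $\phi_\varepsilon\uparrow|w|$, so the integration by parts and the passage $\varepsilon\to0$ are legitimate.

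The paper does not prove this lemma; it cites it as well known. Your divergence-theorem argument is the standard proof. For the write-up, drop the false start with $|x_k|^{A-2}x_k$ and replace the garbled display by
\[
(A+k-1)\int_{\R^N}|x_k|^{A-1}\phi_\varepsilon\,dx=\int_{\R^N}\phi_\varepsilon\operatorname{div}F\,dx=-\int_{\R^N}F\cdot\nabla\phi_\varepsilon\,dx\le\int_{\R^N}|x_k|^{A}|\nabla w|\,dx.
\]
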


We next recall the endpoint Gagliardo--Nirenberg--Sobolev inequality, which will be combined with the weighted $L^1$ Hardy inequality above to obtain the required weighted Sobolev inequality. For the proof, we refer to \cite[\S5.6.1, Theorem $1$]{Evans}, \cite{Gagliardo}, and \cite{Nirenberg}.

\begin{theorem}[Gagliardo--Nirenberg--Sobolev inequality, $p=1$]
\label{thm:GNS}
Let $N\geq2$. Then there exists a constant $C_N>0$, depending only on
$N$, such that
\begin{equation}\label{GNS}
    \|g\|_{L^{\frac{N}{N-1}}(\mathbb{R}^N)}
    \leq
    C_N\|\nabla g\|_{L^1(\mathbb{R}^N)}
\end{equation}
for every $g\in\Cc(\mathbb{R}^N)$. The sharp constant in \eqref{GNS} is $C_N=\frac{\Gamma\left(\frac{N}{2}+1\right)^{1/N}}{N\sqrt{\pi}}$.
\end{theorem}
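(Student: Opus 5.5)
The plan is the classical isoperimetric route to the endpoint Sobolev inequality, namely the Federer--Fleming/Maz'ya argument, which also produces the sharp constant. I first reduce to nonnegative functions. For $g\in\Cc(\R^N)$, the function $|g|$ is Lipschitz, compactly supported, and satisfies $|\nabla|g||=|\nabla g|$ almost everywhere. By a standard mollification argument it is therefore enough to prove \eqref{GNS} for $g\ge0$ that are Lipschitz with compact support, the right-hand side being continuous under this approximation.

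Next I apply the coarea formula: $\|\nabla g\|_{L^1}=\int_0^\infty \mathcal H^{N-1}(\{g=t\})\,dt\ge\int_0^\infty P(E_t)\,dt$, where $E_t=\{g>t\}$ and $P$ denotes the perimeter. The sharp isoperimetric inequality $P(E)\ge N\omega_N^{1/N}|E|^{(N-1)/N}$, with $\omega_N$ the volume of the unit ball, then gives
\[
\|\nabla g\|_{L^1}\ge N\omega_N^{1/N}\int_0^\infty |E_t|^{\frac{N-1}{N}}\,dt .
\]
To close the argument I need $\|g\|_{L^{N/(N-1)}}\le\int_0^\infty |E_t|^{(N-1)/N}\,dt$. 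This follows from the layer-cake representation $g=\int_0^\infty \chi_{E_t}\,dt$ together with Minkowski's integral inequality in $L^{N/(N-1)}$, since $\|\chi_{E_t}\|_{L^{N/(N-1)}}=|E_t|^{(N-1)/N}$.

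Combining the two estimates gives \eqref{GNS} with $C_N=(N\omega_N^{1/N})^{-1}$. Because $\omega_N=\pi^{N/2}/\Gamma(\frac N2+1)$, we have $\omega_N^{-1/N}=\Gamma(\frac N2+1)^{1/N}/\sqrt\pi$, which is exactly the stated constant. Sharpness follows by testing with Lipschitz approximations of $\chi_{B_1}$, namely radial functions equal to $1$ on $B_1$ and decaying linearly to $0$ on $B_{1+\varepsilon}$. The ratio then tends to the isoperimetric quotient of the ball, so the constant cannot be improved.

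The main obstacle is that the whole argument relies on the sharp isoperimetric inequality, which I take as known (De Giorgi's theorem). If one only wants some constant $C_N$, I would avoid that input and use the elementary Gagliardo--Nirenberg argument instead. Starting from $|g(x)|\le\int_{\R}|\partial_i g|\,dx_i$ for each coordinate direction, I raise these bounds to the power $\frac1{N-1}$ and multiply them together. Integrating one variable at a time with the generalized Hölder inequality then yields $\|g\|_{N/(N-1)}\le\prod_i\|\partial_i g\|_1^{1/N}\le\|\nabla g\|_1$. The coarea and layer-cake steps themselves are routine.
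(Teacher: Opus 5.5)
Your proof is correct, but it takes a different route from the one the paper relies on. The paper gives no proof of its own. It cites Evans, \S5.6.1, Theorem~1, together with Gagliardo and Nirenberg, and those references contain the elementary slicing argument you put in your last paragraph: the bound $|g(x)|\le\int_{\R}|\partial_i g|\,dx_i$ in each coordinate direction, the product of these $N$ bounds raised to the power $\frac{1}{N-1}$, and iterated generalized H\"older. That last step is a Loomis--Whitney-type argument, of the same kind the paper uses again in Section~\ref{Section5}. The paper simply states the value of the sharp constant without proof, and none of the cited references proves it.

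Your main argument (coarea formula, sharp isoperimetric inequality, and the layer-cake/Minkowski bound $\|g\|_{L^{N/(N-1)}}\le\int_0^\infty|E_t|^{(N-1)/N}\,dt$) is the Federer--Fleming/Maz'ya proof. It is what actually justifies $C_N=(N\omega_N^{1/N})^{-1}=\Gamma(\frac N2+1)^{1/N}/(N\sqrt\pi)$. Combined with your ball test functions, it also settles the sharpness claim. Those test functions should be mollified so that they lie in $\Cc(\R^N)$; this costs nothing, since $\|\nabla(\rho_\delta*g)\|_{L^1}\to\|\nabla g\|_{L^1}$.

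The trade-off is as follows. Your route gives the optimal constant and shows that the sharp $L^1$ Sobolev inequality is equivalent to the isoperimetric inequality, but it rests on De Giorgi's theorem and geometric measure theory. The cited argument uses only Fubini and H\"older and yields a non-sharp constant, for instance $1$. That weaker version is all the paper needs: $C_N$ enters only through $c_2$ in Lemma~\ref{lem:L1sob}, and its numerical value plays no role in the proof of Theorem~\ref{Theorem: HSM Plane Singularity}.
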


We now combine the weighted $L^1$ Hardy inequality with the endpoint Sobolev inequality to obtain the following weighted Sobolev inequality.

\begin{lemma}[Weighted $L^1$ Sobolev inequality]\label{lem:L1sob}
Let $A+k-1>0$, $1\leq r\leq\frac{N}{N-1}$, and define $B:=(N+A-1)r-N$. Set
\begin{equation*}
    c_1:=\frac{1}{A+k-1},
\quad c_2:=C_N\left(1+\frac{|A|}{A+k-1}\right),
    \quad \text{and} \quad
    \theta:=(r-1)(N-1)\in[0,1].
\end{equation*}
Then, for every $w\in C^{1}_{c}(\mathbb{R}^{N} \setminus K)$,
\begin{equation}
    \left(
        \int_{\mathbb{R}^{N}}|x_k|^B|w(x)|^r\,dx
    \right)^{\frac{1}{r}}
    \leq
    C_1 \int_{\mathbb{R}^{N}} |x_k|^A|\nabla w(x)|\, d x, 
\end{equation}
where $C_1:= \left( c_1^{1-\theta} c_2^{\theta \frac{N}{N-1}} \right)^{\frac{1}{r}}$.
\end{lemma}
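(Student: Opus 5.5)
The plan is to interpolate between the two endpoints $r=1$ and $r=\frac{N}{N-1}$ with Hölder's inequality. Each endpoint is then controlled by the weighted gradient integral $I:=\int_{\mathbb{R}^N}|x_k|^A|\nabla w|\,dx$.

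\textbf{Endpoint $r=1$.} Here $B=A-1$, and Lemma \ref{lem:L1H} gives directly
\[
\int_{\mathbb{R}^N}|x_k|^{A-1}|w|\,dx\le c_1 I .
\]

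\textbf{Endpoint $r=\frac{N}{N-1}$.} Here $B=(N+A-1)\frac{N}{N-1}-N=\frac{AN}{N-1}$. I apply Theorem \ref{thm:GNS} to $g:=|x_k|^A w$, which lies in $C^1_c(\mathbb{R}^N)$ because $w$ is supported away from $K$. Its $L^{N/(N-1)}$ norm is exactly
\[
\left(\int_{\mathbb{R}^N}|x_k|^{\frac{AN}{N-1}}|w|^{\frac{N}{N-1}}\,dx\right)^{\frac{N-1}{N}}.
\]
Since $|\nabla |x_k|^A|=|A||x_k|^{A-1}$, we have
\[
|\nabla g|\le |x_k|^A|\nabla w|+|A||x_k|^{A-1}|w|.
\]
Integrating and using Lemma \ref{lem:L1H} on the second term gives $\|\nabla g\|_{L^1}\le (1+|A|c_1)I$. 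Hence
\[
\left(\int_{\mathbb{R}^N}|x_k|^{\frac{AN}{N-1}}|w|^{\frac{N}{N-1}}\,dx\right)^{\frac{N-1}{N}}\le c_2 I .
\]

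\textbf{Interpolation.} For intermediate $r$, write $r=(1-\theta)\cdot 1+\theta\frac{N}{N-1}$. This holds with $\theta=(r-1)(N-1)$, since then $1+\frac{\theta}{N-1}=r$, and $\theta\in[0,1]$ exactly when $1\le r\le\frac{N}{N-1}$. I then check that the weights split compatibly:
\[
(1-\theta)(A-1)+\theta\frac{AN}{N-1}=A-1+\theta\frac{A+N-1}{N-1}=A-1+(r-1)(A+N-1)=B .
\]
So the integrand factors as
\[
|x_k|^B|w|^r=\bigl(|x_k|^{A-1}|w|\bigr)^{1-\theta}\bigl(|x_k|^{\frac{AN}{N-1}}|w|^{\frac{N}{N-1}}\bigr)^{\theta}.
\]
Hölder's inequality with exponents $\frac{1}{1-\theta}$ and $\frac1\theta$ (trivial when $\theta\in\{0,1\}$) gives
\[
\int_{\mathbb{R}^N}|x_k|^B|w|^r\,dx\le (c_1I)^{1-\theta}\,(c_2I)^{\theta\frac{N}{N-1}}.
\]
The total power of $I$ is $1-\theta+\frac{\theta N}{N-1}=r$. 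Taking $r$-th roots yields the claim with $C_1=\bigl(c_1^{1-\theta}c_2^{\theta\frac{N}{N-1}}\bigr)^{1/r}$.

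There is no serious obstacle here. The only points needing care are the exponent bookkeeping that matches $B$ and $r$, and the observation that $g=|x_k|^Aw$ is $C^1$ with compact support, which holds because $w$ vanishes near $K$ even when $A<0$.
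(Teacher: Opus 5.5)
Your proposal is correct and follows essentially the same route as the paper: the endpoint $r=1$ from Lemma~\ref{lem:L1H}, the endpoint $r=\frac{N}{N-1}$ from Theorem~\ref{thm:GNS} applied to $g=|x_k|^A w$, and then H\"older interpolation with $\theta=(r-1)(N-1)$, using the same exponent bookkeeping for $B$. Your final constant coincides with $C_1$ as stated.
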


\begin{proof}
We first consider the two endpoint cases. If $r=1$, then $B=A-1$, and the desired estimate follows immediately from
Lemma \ref{lem:L1H},
\begin{equation*}
    \int_{\mathbb{R}^{N}} |x_k|^{A-1}|w(x)|\,dx
    \leq
    c_1 \int_{\mathbb{R}^{N}} |x_k|^A|\nabla w(x)|\, d x.
\end{equation*}

At the other endpoint, let $r=\frac{N}{N-1}$. In this case, $B=\frac{NA}{N-1}$. Define $g:=|x_k|^A w$. Since $w\in\ C^{1}_{c}(\mathbb{R}^{N} \setminus K)$, the function $g$ has compact support contained
in $\mathbb{R}^{N} \setminus K$ and may therefore be extended by zero to a function in
$\Cc(\mathbb{R}^N)$. On $\mathbb{R}^{N} \setminus K$, using $|\nabla |x_k||=1$, we have
\begin{align*}
    |\nabla g(x)|
    \leq
    |x_k|^A|\nabla w(x)|
    +
    |A||x_k|^{A-1}|w(x)|.
\end{align*}
Consequently, Lemma \ref{lem:L1H} gives
\begin{align*}
 \|\nabla g\|_{L^1(\mathbb{R}^N)} & \leq \int_{\mathbb{R}^{N}} |x_k|^A|\nabla w(x)|\, d x +  |A| \int_\Omega |x_k|^{A-1}|w(x)|\,dx \\ &   \leq \left(  1+\frac{|A|}{A+k-1} \right)   \int_{\mathbb{R}^{N}} |x_k|^A|\nabla w(x)|\, d x.
\end{align*}
Applying Theorem~\ref{thm:GNS}, we obtain
\begin{align*}
    \left(
        \int_\Omega
        |x_k|^{\frac{NA}{N-1}}
        |w(x)|^{\frac{N}{N-1}}
        \,dx
    \right)^{\frac{N-1}{N}}
   &  =
    \|g\|_{L^{\frac{N}{N-1}}(\mathbb{R}^N)} \\ &
    \leq
    C_N\|\nabla g\|_{L^1(\mathbb{R}^N)}
\leq
    c_2    \int_{\mathbb{R}^{N}} |x_k|^A|\nabla w(x)|\, d x.
\end{align*}

It remains to consider the intermediate range $1<r<\frac{N}{N-1}$. In this case $0<\theta<1$, and the definition of $\theta$ gives
\begin{equation*}
    (1-\theta)
    +\theta\frac{N}{N-1}
    =
    1+\frac{\theta}{N-1}
    =
    r.
\end{equation*}
Moreover,
\begin{align*}
    (1-\theta)(A-1)
    +\theta\frac{NA}{N-1}
    =
    A-1+\theta\frac{A+N-1}{N-1} =
    (N+A-1)r-N =B.
\end{align*}
It follows that
\begin{align*}
    |x_k|^B|w(x)|^r
    =
    \left(
        |x_k|^{A-1}|w(x)|
    \right)^{1-\theta}
    \left(
        |x_k|^{\frac{NA}{N-1}}
        |w(x)|^{\frac{N}{N-1}}
    \right)^\theta.
\end{align*}
Applying Hölder's inequality with conjugate exponents
$1/(1-\theta)$ and $1/\theta$, we obtain
\begin{align*}
    \int_{\R^N} |x_k|^B|w(x)|^r\,dx
    &\leq
    \left(
        \int_{\R^N} |x_k|^{A-1}|w(x)|\,dx
    \right)^{1-\theta}
    \left(
        \int_{\R^N}
        |x_k|^{\frac{NA}{N-1}}
        |w(x)|^{\frac{N}{N-1}}
        \,dx
    \right)^\theta \\
    &\leq
    \left(c_1\int_{\mathbb{R}^{N}} |x_k|^A|\nabla w(x)|\, d x\right)^{1-\theta}
    \left(c_2\int_{\mathbb{R}^{N}} |x_k|^A|\nabla w(x)|\, d x\right)^{\theta N/(N-1)} \\ 
    &=
    c_1^{1-\theta}
    c_2^{\theta \frac{N}{N-1}}
   \left( \int_{\mathbb{R}^{N}} |x_k|^A|\nabla w(x)|\, d x \right)^r.
\end{align*}
Taking the $r$-th root completes the proof of lemma.
\end{proof}

\begin{remark}
The relation $B=(N+A-1)r-N$ is precisely the relation dictated by the scaling of the weighted
inequality. The condition $A+k-1>0$ is the condition required for the
$L^1$ Hardy inequality in Lemma \ref{lem:L1H}. In particular, no
positivity assumption on $A$ itself is needed.
\end{remark}

We next record some relations between the parameters introduced above. These relations will be useful in the proof of the key estimate below. We omit its purely algebraic proof.

\begin{lemma}[Parameters]\label{lem:param}
Let $N \geq 2$, $1 \leq k<N$, $1<p<2$,
$p<q\leq p^{*}=\frac{Np}{N-p}$, and $p'= \frac{p}{p-1}$. Define
\begin{equation}\label{parameters}
s_a:=p+\frac{2(p-1)(q-p)}{p},
\quad
m:=\frac{s_a+p}{2},
\quad
A:=1-k+\frac{(N-k)(p-1)(q-p)}{p^2}.
\end{equation}
Then $p<s_a<q$ and $m=p+\frac{(p-1)(q-p)}{p}$. In particular, $m>p>1$ and $m>\frac{p}{2}$. Moreover, the following identities hold:
\begin{enumerate}[label=\textup{(\roman*)}]
\item $2m-p=s_a$;

\item $m-1=\frac{(p-1)q}{p}=\frac{q}{p'}$, so that $(m-1)p'=q$;

\item $2A+k-2=b(s_a)$;

\item $\frac{Ap-p+k}{p-1}=b(q)$;

\item $A+k-1=\frac{(N-k)(p-1)(q-p)}{p^2}>0$ and $N+A-1=\frac{(N-k)m}{p}$;

\item for $s\in\{s_a,q\}$, the number $r_s:=\frac{s}{m}$ satisfies $1<r_{s_a}<r_q\leq\frac{N}{N-1}$, and $(N+A-1)r_s-N=b(s)$,
\end{enumerate}
where $b(s)$ is defined in \eqref{b(s)}. 
\end{lemma}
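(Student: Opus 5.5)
The statement to prove is Lemma \ref{lem:param}, which is purely algebraic. The plan is to substitute the definitions \eqref{parameters} and \eqref{b(s)} directly. Throughout I write $t:=\frac{(p-1)(q-p)}{p}>0$, so that $s_a=p+2t$ and $A=1-k+\frac{(N-k)t}{p}$.

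\emph{Basic inequalities and the formula for $m$.} Clearly $m=\frac{s_a+p}{2}=p+t$, so $m>p>1>\frac p2$, and identity (i), $2m-p=s_a$, is immediate. The bound $s_a>p$ holds because $t>0$. The bound $s_a<q$ is equivalent to $2t<q-p$, that is, to $\frac{2(p-1)}{p}<1$, which holds exactly because $p<2$. This is the one place where the assumption $p<2$ enters.

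\emph{Identities (ii)--(v).} For (ii), compute
\[
m-1=p-1+\frac{(p-1)(q-p)}{p}=\frac{(p-1)q}{p}=\frac{q}{p'} .
\]
For (v), $A+k-1=\frac{(N-k)t}{p}$, which is positive since $k<N$. Also
\[
N+A-1=(N-k)+\frac{(N-k)t}{p}=\frac{(N-k)(p+t)}{p}=\frac{(N-k)m}{p}.
\]
For (iii), compute
\[
2A+k-2=-k+\frac{2(N-k)t}{p}.
\]
On the other hand,
\[
b(s_a)=\frac{(N-k)(p+2t)}{p}-N=-k+\frac{2(N-k)t}{p},
\]
so the two sides agree. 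For (iv), $Ap-p+k=(1-k)p-p+k+(N-k)t=k(1-p)+(N-k)t$. Dividing by $p-1$ and using $\frac{t}{p-1}=\frac{q-p}{p}$ gives
\[
-k+\frac{(N-k)(q-p)}{p}=\frac{(N-k)q}{p}-N=b(q).
\]

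\emph{Identity (vi).} The relation $(N+A-1)r_s-N=\frac{(N-k)m}{p}\cdot\frac{s}{m}-N=b(s)$ follows from (v). Since $m>0$, $r_{s_a}<r_q$ is equivalent to $s_a<q$, and $r_{s_a}>1$ is equivalent to $s_a>m$, which holds because $s_a-m=t>0$.

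The only step needing care is the upper bound $r_q=\frac{q}{m}\le\frac{N}{N-1}$, which is equivalent to $(N-1)q\le Nm$. Since $m=\frac{(p-1)q}{p}+1$, this reads $(N-1)q\le\frac{N(p-1)q}{p}+N$. Rearranging, it becomes $q\left(N-1-\frac{N(p-1)}{p}\right)\le N$, that is, $q\cdot\frac{N-p}{p}\le N$. This is exactly $q\le\frac{Np}{N-p}=p^*$, with equality at the critical exponent.

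I do not expect any genuine obstacle. The argument only requires bookkeeping of the relations between the parameters, each checked against the hypotheses $p<2$, $k<N$ and $q\le p^*$.
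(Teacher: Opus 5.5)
Your verification is correct and complete. The paper omits this proof as purely algebraic, and your direct substitution with $t=\frac{(p-1)(q-p)}{p}$ is exactly the intended argument; the one implicit step, dividing by $\frac{N-p}{p}>0$ in the bound $r_q\le\frac{N}{N-1}$, is justified by $N\ge 2>p$.
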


The next estimate is the main analytic ingredient needed to control the
weighted first-order term by the mixed energy and the weighted $L^s$
quantities introduced above.

\begin{lemma}[Key estimate]\label{lem:key} Let $A$ and $m>1$ be as defined in \eqref{parameters}, and let  $v\in C^{1}_{c}(\mathbb{R}^{N} \setminus K)$. Define
\begin{equation*}
    I(v):=
    \int_{\mathbb{R}^{N}}
    |x_k|^A|v(x)|^{m-1}|\nabla v(x)|\, dx<\infty.
\end{equation*}
Then
\begin{equation}
    I(v)
    \leq
    \frac{2}{p}
    \left| \frac{k-p}{p} \right|^{\frac{2-p}{2}}
    J(v)^{\frac12}
    T_{s_a}(v)^{\frac12}
    +
    J(v)^{\frac1p}
    T_q(v)^{\frac1{p'}},
\end{equation}
where $s_a$ is defined in \eqref{parameters}, $p<q\leq p^*
=\frac{Np}{N-p}$, $J(v)$ is defined in \eqref{Defn: J(v)}, and
$T_s$ is defined in \eqref{Defn: T_s}.
\end{lemma}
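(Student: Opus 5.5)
The plan is to split $I(v)$ according to which term dominates $M_v$. On $Z_v$, write $\lambda:=\left|\frac{k-p}{p}\right|$ and split $Z_v$ into
\[
E_1:=\{x\in Z_v:\ |\nabla v(x)|\le \lambda |v(x)|/|x_k|\}, \qquad E_2:=Z_v\setminus E_1 .
\]
Outside $Z_v$ the integrand of $I(v)$ vanishes, so $I(v)=\int_{E_1}+\int_{E_2}$. On $E_1$ we have $M_v\le 2\lambda|v|/|x_k|$. On $E_2$ we have $M_v\le 2|\nabla v|$, and also $M_v\ge|\nabla v|$ everywhere. Since $p-2<0$, upper bounds on $M_v$ give lower bounds on $M_v^{p-2}$, which is what we need to extract $J(v)$.

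\textbf{Region $E_1$.} Write the integrand as
\[
|x_k|^A|v|^{m-1}|\nabla v| = \Big(|x_k|^{\frac{p-k}{2}}|\nabla v|M_v^{\frac{p-2}{2}}\Big)\cdot\Big(|x_k|^{A-\frac{p-k}{2}}|v|^{m-1}M_v^{\frac{2-p}{2}}\Big).
\]
Apply Cauchy--Schwarz. The first factor squared integrates to at most $J(v)$. For the second factor, use $M_v^{2-p}\le (2\lambda)^{2-p}|v|^{2-p}|x_k|^{p-2}$ on $E_1$. The second factor squared is then bounded by
\[
(2\lambda)^{2-p}|x_k|^{2A-p+k+p-2}|v|^{2m-2+2-p}=(2\lambda)^{2-p}|x_k|^{2A+k-2}|v|^{2m-p}.
\]
By Lemma \ref{lem:param}(i),(iii), this equals $(2\lambda)^{2-p}|x_k|^{b(s_a)}|v|^{s_a}$. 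Hence
\[
\int_{E_1}\le 2^{\frac{2-p}{2}}\lambda^{\frac{2-p}{2}}J(v)^{1/2}T_{s_a}(v)^{1/2}.
\]
The stated constant is $\frac{2}{p}\lambda^{\frac{2-p}{2}}$, and $2^{\frac{2-p}{2}}\le 2/p$ for $1<p<2$. Indeed, $2^{1-p/2}\le 2^{1/2}<2/p$ when $p<\sqrt2$; for $p\in[\sqrt2,2)$, one checks that $\log 2\,(1-p/2)\le \log(2/p)$ by concavity of $t\mapsto\log t$, with equality at $p=2$. So this bound suffices. The constant $\frac{2}{p}$ may instead come from the authors writing $|v|^{m-1}\nabla v$ as $\frac1m\nabla|v|^m$, but the crude bound above should be enough.

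\textbf{Region $E_2$.} Here $M_v\le 2|\nabla v|$, but we want a constant $1$ in front of the second term. Use the lower bound $M_v\ge|\nabla v|$ instead, which gives $|\nabla v|^{2-p}\le M_v^{2-p}$. Then use Hölder with exponents $p$ and $p'$:
\[
|x_k|^A|v|^{m-1}|\nabla v| = \Big(|x_k|^{\frac{p-k}{p}}|\nabla v|\Big)\Big(|x_k|^{A-\frac{p-k}{p}}|v|^{m-1}\Big).
\]
The first factor to the power $p$ is $|x_k|^{p-k}|\nabla v|^p$. This is not directly $|x_k|^{p-k}|\nabla v|^2M_v^{p-2}$ in general, since $|\nabla v|^{p}=|\nabla v|^2|\nabla v|^{p-2}\ge |\nabla v|^2M_v^{p-2}$ goes the wrong way. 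On $E_2$, however, $M_v\le 2|\nabla v|$, so $|\nabla v|^p\le 2^{2-p}|\nabla v|^2M_v^{p-2}$. That gives the $E_2$ contribution with constant $2^{(2-p)/p}$ rather than $1$. For the second factor, $(A-\frac{p-k}{p})p'=\frac{Ap-p+k}{p-1}=b(q)$ by (iv), and $(m-1)p'=q$ by (ii), so it produces $T_q(v)^{1/p'}$. Thus
\[
\int_{E_2}\le 2^{\frac{2-p}{p}}J(v)^{1/p}T_q(v)^{1/p'} .
\]

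\textbf{Main obstacle: the constants.} The exact constants in the statement are the obstacle. To get constant $1$ on the $T_q$ term, I would instead split with a threshold: $E_2=\{|\nabla v|\ge t\lambda|v|/|x_k|\}$ with $t$ large. There $M_v\le(1+1/t)|\nabla v|$, so the $E_2$ constant becomes $(1+1/t)^{(2-p)/p}$, and the $E_1$ constant becomes $(1+t)^{(2-p)/2}\lambda^{(2-p)/2}$. Matching $\frac{2}{p}$ exactly may require the choice of $t$ used by the authors. If only existence of some constant matters for the theorem, any such split works. I would first try to reproduce the stated constants by this optimization, and fall back on the crude constants otherwise. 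Finiteness of $I(v)$ is clear from compact support away from $K$ and $m>1$.
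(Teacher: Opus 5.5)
Your two-region argument is correct, but what it proves is
\[
I(v)\le (2\lambda)^{\frac{2-p}{2}}J(v)^{\frac12}T_{s_a}(v)^{\frac12}+2^{\frac{2-p}{p}}J(v)^{\frac1p}T_q(v)^{\frac1{p'}},
\]
and this is not the stated estimate. Your first constant is smaller than $\frac2p\lambda^{\frac{2-p}{2}}$, but your second constant is strictly larger than $1$, so neither bound implies the other.

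The threshold optimization you propose cannot repair this. For every finite $t$ the factor $(1+1/t)^{\frac{2-p}{p}}$ exceeds $1$, while $(1+t)^{\frac{2-p}{2}}\to\infty$ as $t\to\infty$. Choosing $t$ depending on $v$ does not help either. Write $X:=\lambda^{\frac{2-p}{2}}J^{\frac12}T_{s_a}^{\frac12}$ and $Y:=J^{\frac1p}T_q^{\frac1{p'}}$. Then the best such bound exceeds $Y$ by at least a multiple of $X^{\frac{2}{4-p}}Y^{\frac{2-p}{4-p}}$, which is not $\le\frac2pX$ when $X\ll Y$. The underlying reason is that $|\nabla v|^p\le|\nabla v|^2M_v^{p-2}$ on $Z_v$ would require $M_v\le|\nabla v|$, i.e.\ $v=0$. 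So any pointwise splitting of the domain loses a factor on the $T_q$ term.

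The missing idea is an absorption step in place of the splitting.

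\begin{enumerate}[label=(\alph*)]
\item Apply Cauchy--Schwarz on all of $Z_v$, using your $E_1$ factorization.
\item Use subadditivity of $t\mapsto t^{2-p}$: $M_v^{2-p}\le\lambda^{2-p}|v|^{2-p}|x_k|^{p-2}+|\nabla v|^{2-p}$. By (i) and (iii), the first piece gives exactly $T_{s_a}(v)$.
\item For the second piece, factor the integrand as $\big(|x_k|^A|v|^{m-1}|\nabla v|\big)^{2-p}\,|x_k|^{Ap-p+k}|v|^{(m-1)p}$. Apply H\"older with exponents $\frac1{2-p}$ and $\frac1{p-1}$. By (ii) and (iv), this is at most $I(v)^{2-p}T_q(v)^{p-1}$, so $I$ itself reappears on the right.
\item Using $\sqrt{x+y}\le\sqrt x+\sqrt y$, you get
\[
I\le\lambda^{\frac{2-p}{2}}J^{\frac12}T_{s_a}^{\frac12}+J^{\frac12}T_q^{\frac{p-1}{2}}I^{\frac{2-p}{2}}.
\]
\item Young's inequality with exponents $\frac2{2-p}$ and $\frac2p$ bounds the last term by $\frac{2-p}{2}I+\frac p2J^{\frac1p}T_q^{\frac1{p'}}$.
\item Since $I<\infty$, subtract $\frac{2-p}{2}I$ and multiply by $\frac2p$. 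This gives exactly $\frac2p$ on the $T_{s_a}$ term and $1$ on the $T_q$ term.
\end{enumerate}

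So the factor $\frac2p$ comes from this absorption, not from rewriting $|v|^{m-1}\nabla v$ through $\nabla|v|^m$. Your weaker bound would still suffice for Theorem \ref{Theorem: HSM Plane Singularity}, where only the value of $L$ changes. It does not, however, establish the lemma as stated.
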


\begin{proof}
Let $Z_{v} = \{ x  \in \mathbb{R}^{N} \setminus K : \nabla v(x) \neq 0 \} $. It suffices to consider the integral over $Z_{v}$.
On $Z_{v}$, we have $M_{v}(x)>0$, where $M_{v}$ is defined in \eqref{Defn: M_v}, and therefore
\begin{align*}
    |x_k|^A|v(x)|^{m-1}|\nabla v(x)|
    &=
    \left[
        |x_k|^{\frac{p-k}{2}}
        |\nabla v(x)|M_{v}(x)^{\frac{p-2}{2}}
    \right]
    \left[
        |x_k|^{A-\frac{p-k}{2}}
        |v(x)|^{m-1}M_{v}(x)^{\frac{2-p}{2}}
    \right],
\end{align*}
An application of the Cauchy--Schwarz inequality consequently gives
\begin{align*}
    I(v)
    \leq
    J(v)^{\frac{1}{2}}
    \left(
        \int_{Z_{v}}
        |x_k|^{2A-p+k}
        |v(x)|^{2m-2}
        M_{v}(x)^{2-p}\,dx \right)^{\frac{1}{2}},
\end{align*}
where $J(v)$ is defined in \eqref{Defn: J(v)}.

We now estimate the second factor. Using $ M_{v}(x)= \left| \frac{k-p}{p} \right| \frac{|v(x)|}{|x_k|} +|\nabla v(x)|$, which is defined in \eqref{Defn: M_v} and $0<2-p<1$, we have
\begin{equation*}
    M_{v}(x)^{2-p}
    \leq
    \left| \frac{k-p}{p} \right|^{2-p}
    |v(x)|^{2-p}|x_k|^{p-2}
    +
    |\nabla v(x)|^{2-p}.
\end{equation*}
It follows that
\begin{align*}
    \int_{Z_{v}}
    |x_k|^{2A-p+k}
    |v(x)|^{2m-2}
    M_{v}(x)^{2-p}\, dx
    \leq
    \left| \frac{k-p}{p} \right|^{2-p}K_1+K_2,
\end{align*}
where
\begin{align*}
    K_1
    &:=
    \int_{Z_{v}}
    |x_k|^{2A+k-2}
    |v(x)|^{2m-p}\,dx,\\
    K_2
    &:=
    \int_{Z_{v}}
    |x_k|^{2A-p+k}
    |v(x)|^{2m-2}
    |\nabla v(x)|^{2-p}\,dx.
\end{align*}
By Lemma \ref{lem:param} (i) and (iii), the first of these integrals is
precisely $K_1=T_{s_a}(v)$, where $T_{s_a}$ is defined in \eqref{Defn: T_s}.

For the second integral, we write its integrand as
\begin{align*}
    |x_k|^{2A-p+k}
    |v(x)|^{2m-2}
    |\nabla v(x)|^{2-p}
    =
    \left(
        |x_k|^A|v(x)|^{m-1}|\nabla v(x)|
    \right)^{2-p}
    |x_k|^{Ap-p+k}
    |v(x)|^{(m-1)p}.
\end{align*}
Hence, Hölder's inequality with conjugate exponents $\frac{1}{2-p}$ and  $\frac{1}{p-1}$ gives
\begin{align*}
    K_2
    \leq
    I(v)^{2-p}
    \left(
        \int_{\R^N}
        |x_k|^{\frac{Ap-p+k}{p-1}}
        |v(x)|^{(m-1)p'}\,dx
    \right)^{p-1}.
\end{align*}
By Lemma \ref{lem:param} (ii) and (iv), the last integral is
$T_q(v)$, and consequently
\begin{equation*}
    K_2
    \leq
    I(v)^{2-p}T_q(v)^{p-1}.
\end{equation*}
Combining the preceding estimates and using
$\sqrt{x+y}\leq\sqrt{x}+\sqrt{y}$ for $x,y\geq0$, we obtain
\begin{align*}
    I(v)
    &\leq
    \left| \frac{k-p}{p} \right|^{\frac{2-p}{2}}
    J(v)^{\frac12}
    T_{s_a}(v)^{\frac12}
    +
    J(v)^{\frac12}
    T_q(v)^{\frac{p-1}{2}}
    I(v)^{\frac{2-p}{2}}.
\end{align*}

It remains to absorb the last occurrence of $I$ into the left-hand side.
Young's inequality with conjugate exponents $P=\frac{2}{2-p}$ and $ P'=\frac{2}{p}$, applied with $x=I(v)^{\frac{2-p}{2}}$ and $y=J(v)^{\frac12}T_q(v)^{\frac{p-1}{2}}$, yields
\begin{align*}
J(v)^{\frac{1}{2}}T_q(v)^{\frac{p-1}{2}}
    I(v)^{\frac{2-p}{2}}
    \leq
    \frac{2-p}{2}I(v)
    +
    \frac{p}{2}
    J(v)^{\frac{1}{p}}
    T_q(v)^{\frac{1}{p'}}.
\end{align*}
Therefore,
\begin{align*}
    I(v)
    \leq
    \left| \frac{k-p}{p} \right|^{\frac{2-p}{2}}
    J(v)^{\frac{1}{2}}
    T_{s_a}(v)^{\frac12}
    +
    \frac{2-p}{2}I(v)
    +
    \frac{p}{2}
    J(v)^{\frac{1}{p}}
    T_q(v)^{\frac{1}{p'}}.
\end{align*}
Since $I(v)<\infty$, subtracting $\frac{2-p}{2}I$ from both sides and
multiplying by $\frac{2}{p}$ proves the lemma.
\end{proof}

The following lemma allows us to approximate functions in
$C_c^1(\mathbb{R}^N)$ by functions supported away from the singular
set $K$, in the case $p<k$. In particular, it shows that, in order to prove
Theorem \ref{Theorem: HSM Plane Singularity}, it is sufficient to consider
$u\in C_c^1(\mathbb{R}^N\setminus K)$.

\begin{lemma}[Cut-off near $K$]\label{lem:cutoff}
Let $p<k$ and $u\in\Cc(\R^N)$. Then there exists a sequence
$u_n\in C^{1}_{c}(\mathbb{R}^{N}\setminus K)$ such that
\begin{equation}
\int_{\mathbb{R}^N}|\nabla u_n(x)|^p\,dx
\longrightarrow
\int_{\mathbb{R}^N}|\nabla u(x)|^p\,dx,
\qquad
\int_{\mathbb{R}^N}\frac{|u_n(x)|^p}{|x_k|^p}\,dx
\longrightarrow
\int_{\mathbb{R}^N}\frac{|u(x)|^p}{|x_k|^p}\,dx,
\end{equation}
and $T_q(u_n)\longrightarrow  T_q(u)$, where $T_{q}$ is defined in \eqref{Defn: T_s}.
\end{lemma}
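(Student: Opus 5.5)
The natural approach is to multiply $u$ by a cut-off in the variable $|x_k|$ that vanishes near $K$. Fix a function $\eta\in C^\infty(\R)$ with $0\le\eta\le1$, $\eta(t)=0$ for $t\le1$, $\eta(t)=1$ for $t\ge2$, and $|\eta'|\le2$. For $n\in\mathbb N$ I would set
\[
\eta_n(x):=\eta(n|x_k|),\qquad u_n:=\eta_n u .
\]
Since $\eta_n$ vanishes on the neighbourhood $\{|x_k|<1/n\}$ of $K$, it is $C^1$ (in fact smooth) on $\R^N$. Hence $u_n\in C^1_c(\R^N\setminus K)$. Moreover $u_n\to u$ pointwise on $\R^N\setminus K$, that is, almost everywhere, and $|u_n|\le|u|$. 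Choose $R>0$ with $\supp u\subset\{|x_k|<R,\ |x_{N-k}|<R\}$.

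The two zeroth-order terms follow from dominated convergence, once I check that the limiting integrands are integrable near $K$.
\begin{itemize}
\item For the Hardy term, $|u|^p|x_k|^{-p}\le\|u\|_\infty^p|x_k|^{-p}\chi_{\supp u}$. This is integrable because $\int_{|x_k|<R}|x_k|^{-p}\,dx_k<\infty$ exactly when $p<k$.
\item For $T_q$ (understood as the weighted norm $\int|u|^q|x_k|^{-\gamma}\,dx$ with $\gamma:=N+(p-N)\frac qp$, as in Definition \ref{remark on T_q}), note that $\gamma$ is affine in $q$. It equals $p$ at $q=p$ and $0$ at $q=p^*$. Hence $\gamma\le p<k$ for $p<q\le p^*$.
\item If $\gamma<0$, the weight is bounded on the support and there is nothing to check. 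If $\gamma\ge0$, then $|x_k|^{-\gamma}$ is locally integrable in $\R^k$, so $|u|^q|x_k|^{-\gamma}\in L^1$.
\end{itemize}
In both cases $|u_n|\le|u|$ provides the dominating function, which gives the convergence of the Hardy term and $T_q(u_n)\to T_q(u)$.

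For the gradient term I would write
\[
\nabla u_n=\eta_n\nabla u+u\,n\,\eta'(n|x_k|)\,\nabla|x_k|,
\]
using $|\nabla|x_k||=1$ away from $K$.
\begin{itemize}
\item The first piece converges to $\nabla u$ in $L^p$ by dominated convergence, since $\eta_n\to1$ a.e.
\item The second piece is supported in the slab $\{1/n\le|x_k|\le2/n\}\cap\supp u$ and is bounded by $2n\|u\|_\infty$. Its $L^p$ norm to the power $p$ is therefore at most
\[
(2n)^p\|u\|_\infty^p\,|B^k_{2/n}|\,|B^{N-k}_R|\le C(N,k,R)\,\|u\|_\infty^p\,n^{p-k}\longrightarrow0,
\]
again because $p<k$.
\end{itemize}
Minkowski's inequality then gives $\nabla u_n\to\nabla u$ in $L^p$, and in particular $\int|\nabla u_n|^p\to\int|\nabla u|^p$.

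The only step that is not entirely routine is the vanishing of the cut-off error term. This is where $p<k$ enters: it says that $K$ has zero $p$-capacity. A secondary point is verifying the exponent bound $\gamma<k$ needed for $T_q$. Both reduce to the elementary computations above, so I do not expect a genuine obstacle.
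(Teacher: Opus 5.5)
Your proposal is correct and follows the paper's proof essentially verbatim. It uses the same cut-off $\eta(n|x_k|)$ with $|\eta'|\le 2$, dominated convergence for the zeroth-order terms and for $\eta_n\nabla u$, and the same slab estimate $(2n)^p\|u\|_\infty^p\,|B^k_{2/n}|\,|B^{N-k}_R|\lesssim n^{p-k}\to 0$ coming from $p<k$. Your explicit check that the dominating functions $|u|^p|x_k|^{-p}$ and $|u|^q|x_k|^{-\gamma}$, with $\gamma=N+(p-N)\frac{q}{p}\in[0,p)$, are integrable near $K$ is a detail the paper leaves implicit.
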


\begin{proof}
Fix $\vartheta\in C^\infty(\R)$ such that $0\leq\vartheta\leq1$,
$\vartheta=0$ on $(-\infty,1]$, $\vartheta=1$ on $[2,\infty)$, and
$|\vartheta'|\leq2$. Define
\begin{equation*}
\eta_n(x):=\vartheta(n|x_k|).
\end{equation*}
Then $\eta_n$ vanishes on the open set
$\{|x_k|<1/n\}$ containing $K$ and is smooth on
$\mathbb{R}^N\setminus K$. Setting $u_n:=\eta_nu$, we see that $\supp u_n\subset \supp u\cap\{|x_k|\geq1/n\}$, which is a compact subset of $\mathbb{R}^N\setminus K$. Hence
$u_n\in C_c^1(\mathbb{R}^N\setminus K)$. Moreover,
\begin{equation*}
|\nabla\eta_n(x)|
\leq
2n\,\mathbf{1}_{\{1/n\leq |x_k|\leq2/n\}}.
\end{equation*}

Choose $R>0$ such that $\supp u\subset Q_R:=B_R^k\times B_R^{N-k}$, and denote
$\omega_j:=|B_1^j|$. Since $|x_k|\leq 2/n$ on the support of
$\nabla\eta_n$, we obtain
\begin{align*}
\int_{\mathbb{R}^N}|u(x)\nabla\eta_n(x)|^p\,dx
&\leq
\|u\|_\infty^p(2n)^p
|B_{2/n}^k|\,|B_R^{N-k}| \\
&=
2^{p+k}\omega_k\omega_{N-k}
R^{N-k}\|u\|_\infty^p n^{p-k}
\longrightarrow0,
\end{align*}
because $p<k$.

On the other hand, $\eta_n\to1$ almost everywhere in $\mathbb{R}^N$.
Therefore, by dominated convergence,
\begin{equation*}
\|(\eta_n-1)\nabla u\|_{L^p(\mathbb{R}^N)}
\longrightarrow0.
\end{equation*}
Since
\begin{equation*}
\nabla u_n(x)-\nabla u(x)
=(\eta_n(x)-1)\nabla u(x)+u(x)\nabla\eta_n(x),
\end{equation*}
the preceding estimates and Minkowski's inequality imply that
$\nabla u_n\to\nabla u$ in $L^p(\mathbb{R}^N)$. Consequently,
\begin{equation*}
\|\nabla u_n\|_{L^p(\mathbb{R}^N)}
\longrightarrow
\|\nabla u\|_{L^p(\mathbb{R}^N)},
\end{equation*}
and hence
\begin{equation*}
\int_{\mathbb{R}^N}|\nabla u_n(x)|^p\,dx
\longrightarrow
\int_{\mathbb{R}^N}|\nabla u(x)|^p\,dx.
\end{equation*}
Finally, since $0\leq\eta_n(x)\leq1$ and $\eta_n\to1$ almost everywhere,
\begin{equation*}
\frac{|u_n(x)|^p}{|x_k|^p}
\leq
\frac{|u(x)|^p}{|x_k|^p},
\qquad
|u_n(x)|^q|x_k|^{-b(q)}
\leq
|u(x)|^q|x_k|^{-b(q)}.
\end{equation*} 
Thus, another application of the dominated
convergence theorem yields
\begin{equation*}
\int_{\mathbb{R}^N}\frac{|u_n(x)|^p}{|x_k|^p}\,dx
\longrightarrow
\int_{\mathbb{R}^N}\frac{|u(x)|^p}{|x_k|^p}\,dx
\end{equation*}
and $T_q(u_n)\longrightarrow T_q(u)$. This completes the proof.
\end{proof}

\section{Proof of the main theorem}\label{Section4}

In this section, we prove Theorem \ref{Theorem: HSM Plane Singularity} by combining the quantitative remainder estimate for the Hardy inequality established in Section 2 with the weighted Sobolev estimates obtained in Section 3.

\begin{proof}[\textbf{Proof of Theorem \ref{Theorem: HSM Plane Singularity}}] By Lemma \ref{lem:cutoff}, it is sufficient to prove the theorem for $u \in C^{1}_{c}(\mathbb{R}^{N} \setminus K)$. Let $u \in C^{1}_{c}(\mathbb{R}^{N} \setminus K)$, and we further assume that $u\not\equiv0$. Set
\begin{equation*}
    v:=|x_k|^{\frac{k-p}{p}} u.
\end{equation*}

We first apply the weighted Sobolev inequality to suitable powers of $v$. Let $m>1$, which is defined in \eqref{parameters}. Defining $ w:=|v|^m\in C_c^1(\mathbb{R}^N\setminus K)$ and using $\nabla w = m |v|^{m-2} v\nabla v$, we have
\begin{equation*}
\int_{\mathbb{R}^{N}} |x_{k}|^{A} |\nabla w(x)| \, dx = m\int_{\mathbb{R}^{N}} |x_{k}|^{A} |v(x)|^{m-1} |\nabla v(x)| \, dx = m I(v),
\end{equation*}
where $A$ is defined in \eqref{parameters}. For $s\in\{s_a,q\}$, Lemma \ref{lem:param} (v), (vi) allows us to apply
Lemma \ref{lem:L1sob} with $r=r_s = \frac{s}{m}$ and $B=b(s)$. Since
\begin{equation*}
|w|^{r_s}=|v|^{mr_s}=|v|^s,
\end{equation*}
we obtain
\begin{align}\label{ineq1}
    T_s(v)^{\frac{m}{s}} & = \left(
    \int_{\mathbb{R}^{N}} |x_k|^{b(s)}|v(x)|^s\,dx \right)^{\frac{m}{s}} = \left( \int_{\mathbb{R}^{N}} |x_k|^{b(s)}|w(x)|^{r_s}\,dx \right)^{\frac{m}{s}} \nonumber \\ & \leq \overline{C}_{1} \int_{\mathbb{R}^{N}} |x_{k}|^{A} |\nabla w(x)| \, dx  = \overline{C}_{1} m I(v),
\end{align}
where $\overline C_1$ denotes the larger of the two constants in
Lemma \ref{lem:L1sob} corresponding to $r=r_{s_a}$ and $r=r_q$.

For convenience, set
\begin{equation*}
    X_s:=T_s(v)^{\frac{1}{s}},
    \qquad
    \mathcal Z:=\max\{X_{s_a},X_q\}.
\end{equation*}
Since $v\not\equiv0$, we have $\mathcal{Z}>0$. Moreover, by
Lemma \ref{lem:param} (i), (ii), we have
\begin{equation*}
    \frac{s_a}{2}=m-\frac{p}{2},
    \qquad
    \frac{q}{p'}=m-1.
\end{equation*}
Both exponents are positive. Consequently, we have
\begin{align*}
    T_{s_a}(v)^{\frac{1}{2}}
    &=
    X_{s_a}^{\frac{s_a}{2}}
    =
    X_{s_a}^{m-\frac{p}{2}}
    \leq
    \mathcal Z^{m-\frac{p}{2}},\\
    T_q(v)^{\frac{1}{p'}}
    &=
    X_q^{\frac{q}{p'}}
    =
    X_q^{m-1}
    \leq
    \mathcal Z^{m-1}.
\end{align*}
We now combine these estimates with the key estimate of
Lemma \ref{lem:key}. Define
\begin{equation*}
    L:=
    \max\left\{
        \frac{2}{p}\left| \frac{k-p}{p} \right|^{\frac{2-p}{2}},1
    \right\},
    \qquad
    \Lambda:=\overline C_1mL.
\end{equation*}
Since $X_s^m=T_s(v)^{\frac{m}{s}}$, by Lemma \ref{lem:key}, the inequality \eqref{ineq1}, for each $s\in\{s_a,q\}$,
\begin{align*}
    X_s^m
    &\leq
    \Lambda
    \left(
        J(v)^{\frac{1}{2}}\mathcal Z^{m-\frac{p}{2}}
        +
        J(v)^{\frac{1}{p}}\mathcal Z^{m-1}
    \right).
\end{align*}
Taking the maximum over $s\in\{s_a,q\}$ and using
$\mathcal Z^m=\max\{X_{s_a}^m,X_q^m\}$, we obtain
\begin{equation*}
    \mathcal Z^m
    \leq
    \Lambda
    \left(
        J(v)^{\frac{1}{2}}\mathcal Z^{m-\frac{p}{2}}
        +
        J(v)^{\frac{1}{p}}\mathcal Z^{m-1}
    \right).
\end{equation*}
Since $\mathcal Z>0$, division by $\mathcal Z^m$ yields
\begin{equation*}
    1
    \leq
    \Lambda J(v)^{\frac{1}{2}}\mathcal Z^{-\frac{p}{2}}
    +
    \Lambda J(v)^{\frac{1}{p}}\mathcal Z^{-1}.
\end{equation*}

At least one of the two terms on the right-hand side is therefore at least
$1/2$. If the first term is at least $1/2$, then
\begin{equation*}
    \mathcal Z^{p}
    \leq
    (2\Lambda)^2J(v).
\end{equation*}
On the other hand, if the second term is at least $1/2$, then
\begin{equation*}
    \mathcal Z^{p}
    \leq
    (2\Lambda)^pJ(v).
\end{equation*}
Thus, in either case,
\begin{equation*}
    \mathcal Z^p
    \leq
    C_2J(v),
    \qquad
    C_2:=
    \max\left\{
        (2\Lambda)^2,
        (2\Lambda)^p
    \right\}.
\end{equation*}
Therefore, we have
\begin{equation*}
    T_q(u)^{\frac{p}{q}}=
    X_q^p
    \leq
    \mathcal Z^p
    \leq
    C_2J(v).
\end{equation*}
Finally, by Lemma \ref{lem:gsr} and the definition of $T_{q}$ (see Remark \ref{remark on T_q}),
\begin{align*}
\int_{\mathbb{R}^{N}} |\nabla u(x)|^{p} \,dx
-
\left| \frac{k-p}{p}\right|^{p}
\int_{\mathbb{R}^{N}}
\frac{|u(x)|^{p}}{|x_{k}|^p}\,dx \geq  C  \left(\int_{\mathbb{R}^{N}} \frac{|u(x)|^{q}}{|x_{k}|^{N+ (p-N)\frac{q}{p}}} \, dx \right)^{\frac{p}{q}}.
\end{align*}
where $C:=\frac{p(p-1)}{2C_2}$. This completes the proof of theorem.
\end{proof}

\section{Hardy--Sobolev--Maz'ya inequalities on arbitrary domains}\label{Section5}
In this section, we prove Theorem \ref{HSM domains p<2}, which establishes the HSM inequality on arbitrary domains. Our approach follows the one-dimensional method of Frank and Loss \cite{FrankLoss2012}. We first establish a one-dimensional Hardy inequality with a remainder term and then prove an auxiliary estimate that controls the supremum norm of a function in terms of the Hardy deficit and its $L^q$ norm. Combining these estimates with a suitable slicing argument, we obtain the desired HSM inequality for arbitrary domains.

\smallskip

Let $f\in C^{1}_{c}\left((-1,1)\right)$, and define halves $H_{+}:=[0,1)$ and $H_{-}:=(-1,0]$. Observe that
\begin{equation*}
    g(s):=f(s) (\omega(s))^{-1}, 
\end{equation*}
where $\omega(s):=(1-|s|)^{\frac{p-1}{p}}$,
is continuous on $(-1,1)$, it is $C^{1}$ on $H_{+}$ and on $H_{-}$ (with one-sided derivatives at $0$), and it has compact support in $(-1,1)$. On each half, we set
\begin{equation}\label{M_g defn one dimension}
    M_{g}(s):=\left( \frac{p-1}{p} \right) \frac{|g(s)|}{1-|s|}+|g'(s)|, \,
    J_{\pm}(g):=\int_{H_{\pm}\cap\{g'\neq0\}}(1-|s|)^{p-1}|g'(s)|^{2}M_{g}(s)^{p-2}\,ds,
\end{equation}
and 
\begin{equation}\label{Defn: J_kappa}
    J(g):=J_{+}(g)+J_{-}(g),
\end{equation}
which is the one-dimensional analogue of the mixed energy \eqref{Defn: J(v)} for $k=1$

The next lemma, establishes Hardy inequality with  remainder terms on $(-1,1)$. 
Using Lemma \ref{lem:elem} with $a:= g \omega'$ and $b:= \omega  g' $, so that $a+b = f'= (g \omega)'$ on each closed halves $H_{\pm}$, we have the following lemma. Its proof is elementary and uses integration by parts.

\begin{lemma}[Hardy inequality with remainder terms on $(-1,1)$]\label{lem:gs1d} Let $1<p<2$, and let $f\in C^{1}_{c}\left((-1,1)\right)$. Then,
\begin{equation*}
\int_{-1}^{1}|f'(s)|^{p}\,ds- \left( \frac{p-1}{p} \right)^{p}\int_{-1}^{1}\frac{|f(s)|^{p}}{(1-|s|)^{p}}\,ds
    \geq 2 \left( \frac{p-1}{p} \right)^{p-1}|g(0)|^{p}+\frac{p(p-1)}{2}J_{}(g),
\end{equation*}
where $g(s)= f(s)\omega^{-1}(s) =  f(s)(1-|s|)^{-\frac{p-1}{p}}$ and $J$ is defined in \eqref{Defn: J_kappa}.
\end{lemma}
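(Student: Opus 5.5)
We split the integral over $(-1,1)$ into the two halves $H_+=[0,1)$ and $H_-=(-1,0]$ and treat each separately; by the symmetry $s\mapsto -s$ it suffices to handle $H_+$, where $1-|s|=1-s$. On $H_+$ we write $f=\omega g$ with $\omega(s)=(1-s)^{\frac{p-1}{p}}$, so that
\[
f'=a+b,\qquad a:=g\omega' = -\tfrac{p-1}{p}(1-s)^{-\frac1p}g,\qquad b:=\omega g'.
\]
Lemma \ref{lem:elem} with $N=1$ then gives pointwise
\[
|f'|^p\ \ge\ |a|^p+p|a|^{p-2}a\,b+\tfrac{p(p-1)}{2}|b|^2(|a|+|b|)^{p-2}.
\]
The first term equals $\left(\frac{p-1}{p}\right)^p\frac{|f|^p}{(1-s)^p}$, which is exactly the Hardy term. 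For the remainder term, we note that $|a|+|b|=(1-s)^{\frac{p-1}{p}}M_g(s)$, so on $\{g'\neq0\}$
\[
|b|^2(|a|+|b|)^{p-2}=(1-s)^{\frac{2(p-1)}{p}+\frac{(p-1)(p-2)}{p}}|g'|^2M_g^{p-2}=(1-s)^{p-1}|g'|^2M_g^{p-2}.
\]
Integrating over $H_+$ therefore produces $\frac{p(p-1)}2J_+(g)$. Where $g'=0$ we have $b=0$, and the lemma's remainder is interpreted as $0$, consistent with the definition of $J_+$.

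The cross term is where the boundary contribution appears. We compute
\[
p|a|^{p-2}a\,b=-p\left(\tfrac{p-1}{p}\right)^{p-1}(1-s)^{-\frac{p-1}{p}}|g|^{p-2}g\,(1-s)^{\frac{p-1}{p}}g'=-\left(\tfrac{p-1}{p}\right)^{p-1}(|g|^p)'.
\]
Here $|\omega'|^{p-2}\omega'\,\omega=-\left(\frac{p-1}{p}\right)^{p-1}$, a constant, which reflects the fact that $\omega$ solves the one-dimensional Euler--Lagrange equation. Integrating over $[0,1)$, and using that $g$ has compact support in $(-1,1)$ so that $|g|^p$ vanishes near $1$, we get
\[
-\left(\tfrac{p-1}{p}\right)^{p-1}\bigl[|g|^p\bigr]_0^1=\left(\tfrac{p-1}{p}\right)^{p-1}|g(0)|^p.
\]
The function $|g|^p$ is $C^1$ on $H_+$ since $p>1$, so the fundamental theorem of calculus applies even where $g=0$.

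On $H_-$ the same computation holds with $\omega'=+\frac{p-1}{p}(1+s)^{-\frac1p}$, and the cross term becomes $+\left(\frac{p-1}{p}\right)^{p-1}(|g|^p)'$. Integrating over $(-1,0]$ gives again $\left(\frac{p-1}{p}\right)^{p-1}|g(0)|^p$. Adding the two halves yields the claimed inequality, with constant $2\left(\frac{p-1}{p}\right)^{p-1}|g(0)|^p$ plus $\frac{p(p-1)}{2}(J_+(g)+J_-(g))$.

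The only delicate points are the bookkeeping of signs in the cross term on each half, and the justification that $g$ is $C^1$ up to $0$ with one-sided derivatives. Since $\omega$ is smooth and positive on each closed half near $0$, the latter holds and the integration by parts is legitimate. No other obstacle is expected.
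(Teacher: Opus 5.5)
Your proposal is correct and follows the paper's own route. You apply Lemma~\ref{lem:elem} on each half with $a=g\omega'$, $b=\omega g'$, identify $|a|^p$ with the Hardy term and the remainder with the integrand of $J_\pm(g)$, and note that the cross term is the exact derivative $\mp\left(\frac{p-1}{p}\right)^{p-1}(|g|^p)'$, whose integral over each half gives $\left(\frac{p-1}{p}\right)^{p-1}|g(0)|^p$. The paper only sketches this ("elementary \ldots\ integration by parts"), and your exponent and sign computations fill in those details correctly.
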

The following one-dimensional result is the key to proof Theorem \ref{HSM domains p<2}. Once it is established, the proof of Theorem \ref{HSM domains p<2} will be analogous to the one in \cite{FrankLoss2012}.
\begin{proposition}[Analogue of {\cite[Proposition 2.5]{FrankLoss2012}} for $1<p<2$]\label{prop:1d}
Let $1<p<2$ and $q>0$. There is a constant $C_{p,q}>0$ such that for every $f\in\Cc\left((-1,1)\right)$,
\begin{align*}
\sup_{t\in[-1,1]} &|f(t)|^{\,q(p-1)+p}\ \\ & \le\ C_{p,q}\ \left( \int_{-1}^{1}|f'(s)|^{p}\,ds- \left( \frac{p-1}{p} \right)^{p}\int_{-1}^{1}\frac{|f(s)|^{p}}{(1-|s|)^{p}}\,ds \right) \Big(\int_{-1}^1|f(s)|^q \, ds\Big)^{p-1}.
\end{align*}
\end{proposition}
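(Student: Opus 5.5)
We plan to follow the scheme of Frank and Loss \cite{FrankLoss2012}, replacing their $p\ge2$ remainder by the mixed energy $J(g)$ of Lemma \ref{lem:gs1d}. Write $\mathcal D(f)$ for the Hardy deficit on the right-hand side and $Q:=\int_{-1}^1|f|^q\,ds$. By Lemma \ref{lem:gs1d},
\begin{equation*}
\mathcal D(f)\ \ge\ c_p\bigl(|g(0)|^p+J_+(g)+J_-(g)\bigr),\qquad g=f\,(1-|s|)^{-\frac{p-1}{p}}.
\end{equation*}
So it suffices to bound $\sup|f|^{q(p-1)+p}$ by a constant times $\bigl(|g(0)|^p+J(g)\bigr)Q^{p-1}$. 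The two sides scale the same way under $f\mapsto\lambda f$, so no normalisation is lost. By the symmetry $s\mapsto -s$ we may assume that $|f|$ attains its maximum $M$ at some $t_0\in H_+$.

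The first step is a one-dimensional analogue of Lemma \ref{lem:key}. For an interval $I\subset H_+$ and a power $m\ge1$, we write $\bigl||g|^m(\beta)-|g|^m(\alpha)\bigr|\le m\int_I|g|^{m-1}|g'|\,ds$. We then split $|g'|=\bigl[(1-s)^{\frac{p-1}{2}}|g'|M_g^{\frac{p-2}{2}}\bigr]\cdot\bigl[(1-s)^{\frac{1-p}{2}}M_g^{\frac{2-p}{2}}\bigr]$ and apply Cauchy--Schwarz, which produces a factor $J_+(g)^{1/2}$. Next we use $M_g^{2-p}\le c\,|g|^{2-p}(1-s)^{p-2}+|g'|^{2-p}$, as in the proof of Lemma \ref{lem:key}. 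The term containing $|g'|^{2-p}$ is treated by Hölder's inequality with exponents $\frac1{2-p},\frac1{p-1}$, and the resulting power of $\int|g|^{m-1}|g'|$ is absorbed into the left side by Young's inequality. The outcome is an estimate of the form
\begin{equation*}
\int_I|g|^{m-1}|g'|\ \lesssim\ J_+^{1/2}\Bigl(\int_I w_1|g|^{2m-p}\Bigr)^{1/2}+J_+^{1/p}\Bigl(\int_I w_2|g|^{(m-1)p'}\Bigr)^{1/p'},
\end{equation*}
with explicit powers $w_i$ of $(1-s)$. We choose $m$ with $(m-1)p'=q$ and rewrite all terms through $f=g(1-s)^{\frac{p-1}{p}}$.

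The second step turns this into a pointwise bound by Chebyshev's inequality. The set $\{|f|>M/2\}$ has measure at most $\ell:=2^qQ/M^q$. If $\ell$ is small compared with $1-t_0$, then within distance $\ell$ of $t_0$, moving away from the boundary point $1$, there is a point $t_1$ with $|f(t_1)|\le M/2$. On $I=[t_0,t_1]$ (or on $[0,t_0]$ if $0$ is reached first) the weight $1-s$ is comparable to a fixed constant $d$. Hence $|g|^m$ drops by $\gtrsim (M/d^{(p-1)/p})^m$, while the weighted integrals on the right are bounded by $M$-powers times $Q$ and powers of $d$. If $0$ is reached first, the drop is replaced by the boundary term $|g(0)|^p$. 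If instead $\ell\gtrsim 1-t_0$, we integrate from $t_0$ to the endpoint $1$, where $g$ vanishes because $f$ has compact support in $(-1,1)$, and control the weights by $1-s\le 1-t_0\lesssim\ell$. In every case, after inserting $Q\sim M^q\ell$ and solving, the inequality $1\lesssim \Lambda J^{1/2}(\cdots)+\Lambda J^{1/p}(\cdots)$ yields $M^{q(p-1)+p}\lesssim (J+|g(0)|^p)Q^{p-1}$. Here we take whichever of the two terms is at least $1/2$, exactly as at the end of the proof of Theorem \ref{Theorem: HSM Plane Singularity}.

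The main obstacle is the bookkeeping of the weights $(1-s)$. The exponents of $d$ must cancel exactly, which is what scale invariance predicts, but it has to be checked in both regimes ($\ell$ small or large relative to $1-t_0$). The first term, involving $T_{s_a}$-type quantities with $s_a<q$, must also be bounded by $M^{\,\cdot}Q$ through $|f|\le M$. I would first carry out the computation in the model case where $t_0$ is near $1$, the case most analogous to the half-line, where everything reduces to powers of $1-s$, and then handle the crossing of $0$ via the $|g(0)|^p$ term.
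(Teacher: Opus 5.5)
There is a genuine gap, in the regime you call the model case. Your local argument does close when $\ell\ll d:=1-t_0$ and $[t_0-\ell,t_0]\subset H_+$. On $I=[t_1,t_0]$ one has $d\le 1-s\le 2d$. If the $J^{1/2}$-term dominates you get $M^p\lesssim J\,\ell\,d^{p-2}\le J\ell^{p-1}$, using $\ell\le d$ and $p<2$. If the $J^{1/p}$-term dominates you get $M^p\lesssim J\ell^{p-1}$ directly. Then $Q=2^{-q}M^q\ell$ finishes.

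The gap is the complementary regime $\ell\gtrsim 1-t_0$, which is exactly the half-line case you plan to do first. There you integrate from $t_0$ to $1$. After substituting $|g|=|f|(1-s)^{-\frac{p-1}{p}}$, your splitting produces
$\int_{t_0}^1(1-s)^{-1-\frac{p-1}{p}(2m-p)}|f|^{2m-p}\,ds$ and $\int_{t_0}^1(1-s)^{-1-\frac{p-1}{p}q}|f|^{q}\,ds$.
Every weight is a negative power of $1-s$, not integrable at $s=1$. So the information $1-s\le 1-t_0\lesssim\ell$ bounds them from below, not from above.

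In fact these integrals are not bounded by any function of $M$, $Q$, $d$. Suppose $f$ vanishes near $1$, and add a bump of height $M/4$ supported in $[1-2\epsilon,1-\epsilon]$. This leaves $M$ and $t_0$ unchanged and changes $Q$ by $O(\epsilon)$, but both integrals blow up like negative powers of $\epsilon$. Hence $1\lesssim\Lambda J^{1/2}(\cdots)+\Lambda J^{1/p}(\cdots)$ cannot be closed in this regime. The obstruction is monotonicity: moving toward the boundary you cannot insert a weight $(1-s)^{\beta}$, $\beta>0$, under the integral, since $(1-s)^\beta\le(1-t_0)^\beta$ there.

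The paper avoids this by integrating only from $0$ outward:
$|f(t)|^{\mu+1}\le|f(0)|^{\mu+1}+(\mu+1)\int_0^1 r^\beta|g|^\mu|g'|\,ds$, with $\mu=q/p'$ and $\beta=\frac{p-1}{p}(\mu+1)$.
This is valid because $(1-t)^\beta\le(1-s)^\beta$ for $s\le t$. This choice of $\beta$ makes the H\"older term exactly $\int|f|^q$. The other term is interpolated between $\int|f|^q$ and $\int_0^1 r^{-1}|f|^{\mu+1}\,ds$. The latter is controlled by a one-dimensional $L^1$ Hardy inequality with boundary term at $0$, and then absorbed. No Chebyshev step near the boundary is needed.

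Within your own framework, a repair is to always move from $t_0$ toward the interior. Use $|f|\le M$ together with $\int_{t_1}^{t_0}(1-s)^{-1-\gamma}\,ds\le d^{-\gamma}/\gamma$, where $\gamma=\frac{p-1}{p}(2m-p)$. This gives $M^p\lesssim Jd^{p-1}\le J\ell^{p-1}$ when $d\lesssim\ell$.

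Your treatment of the point $0$ also needs repair. Stopping at $0$ and invoking $|g(0)|^p$ works only if $\ell\gtrsim1$. Indeed, when $|f(0)|\approx M$ one has $|g(0)|^pQ^{p-1}\approx M^{p+q(p-1)}\ell^{p-1}$. For a spike of width $\ell\ll1$ at $t_0=0$, the boundary term is too small by a factor $\ell^{p-1}$. You must then cross into $H_-$, where $1-|s|\approx1$, and use $J_-$. The paper treats $|f(0)|$ separately by its Case A/Case B argument. Its Case B is a Chebyshev-type step on $[0,\frac12]$, where the weights are harmless.
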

\begin{proof}
    Let $g=f \omega^{-1}$, where $\omega(s)=(1-|s|)^{\frac{p-1}{p}}$. Define $r(s):=1-|s|$, for all $s \in (-1,1)$. Throughout the proof, we assume
\begin{equation*}
    \al:=\frac{p-1}{p},\qquad\mu:=\frac{q}{p'}=\left( \frac{p-1}{p} \right) q, \qquad \beta:=\left( \frac{p-1}{p} \right)(\mu+1),
\end{equation*}
so that $p(\mu+1)=q(p-1)+p$, and $C$ denotes a constant depending only on $p$ and $q$ whose value may change from line to line. Since $f=\omega g$, we have
\begin{equation}\label{eq:fg}
|f(s)|^{q}=|\omega (s)g(s)|^{q} =  r(s)^{\mu} |g(s)|^{q},\quad |f(s)|^{\mu+1}= |\omega(s)g(s)|^{\mu+1} =  r(s)^{\beta}|g(s)|^{\mu+1}.
\end{equation}
We treat the half $[0,1)$ in detail; the half $(-1,0]$ is handled identically by symmetry.

First, we provide a pointwise bound for $f$. Let $t\in[0,1)$. Using $(|g|^{\gamma})'= \gamma |g|^{\gamma-2}gg'$ with $\gamma=\mu+1>1$, we have
\begin{equation*}
|g(t)|^{\mu+1}\leq|g(0)|^{\mu+1}+\int_{0}^{t}\left|\big(|g(s)|^{\mu+1}\big)'\right|ds=|g(0)|^{\mu+1}+(\mu+1)\int_{0}^{t}|g(s)|^{\mu}|g'(s)|\,ds.
\end{equation*}
We multiply both sides by $(1-t)^{\beta}$. Since $(1-t)^{\beta}\leq1$, and $(1-t)^{\beta}\leq(1-s)^{\beta} =r(s)^{\beta}$ for $0\leq s\leq t$, we obtain from \eqref{eq:fg}
\begin{equation}\label{eq:ptw}
|f(t)|^{\mu+1}\leq|f(0)|^{\mu+1}+(\mu+1)\int_{0}^{1}r(s)^{\beta}|g(s)|^{\mu}|g'(s)|\,ds.
\end{equation}
Assume $g'(s) \neq 0$, we have $M_{g}(s)>0$, where $M_g$ is defined in \eqref{M_g defn one dimension} and
\begin{equation*}
r(s)^{\beta}|g(s)|^{\mu}|g'(s)|=\left[r(s)^{\frac{p-1}{2}}|g'(s)|M_{g}(s)^{\frac{p-2}{2}}\right]\cdot\left[r(s)^{\beta-\frac{p-1}{2}}|g(s)|^{\mu}M_{g}(s)^{\frac{2-p}{2}}\right].
\end{equation*}
The Cauchy--Schwarz inequality gives
\begin{equation*}
    \left(\int_{0}^{1}r(s)^{\beta}|g(s)|^{\mu}|g'(s)|\,ds\right)^{2}\leq J_{+}(g)\int_{0}^{1}r(s)^{2\beta-p+1}|g(s)|^{2\mu}M_{g}(s)^{2-p}\,ds,
\end{equation*}
where $J_{+}$ is defined in \eqref{M_g defn one dimension}. Using $2-p\in(0,1)$, we have
\begin{align*}
    M_{g}(s)^{2-p} & = \left( \frac{p-1}{p}\frac{|g(s)|}{r(s)}+|g'(s)| \right)^{2-p} \\ & \leq \left( \frac{p-1}{p} \right)^{2-p}|g(s)|^{2-p}r(s)^{p-2}+|g'(s)|^{2-p}.
\end{align*}
Hence
\begin{align}\label{eq:split}
\left(\int_{0}^{1}r(s)^{\beta}|g(s)|^{\mu}|g'(s)|\,ds\right)^{2} & \leq J_{+}(g)\Bigg(\left( \frac{p-1}{p} \right)^{2-p}\int_{0}^{1}r(s)^{2\beta-1}|g(s)|^{2\mu+2-p}\,ds \nonumber \\ & \quad +\int_{0}^{1}r(s)^{2\beta-p+1}|g(s)|^{2\mu}|g'(s)|^{2-p}\,ds\Bigg).
\end{align}
We will now deal with the second integral above. Its integrand factors as
\begin{equation*}
    r(s)^{2\beta-p+1}|g(s)|^{2\mu}|g'(s)|^{2-p}=\left(r(s)^{\beta}|g(s)|^{\mu}|g'(s)|\right)^{2-p}\cdot r(s)^{\beta p-p+1}|g(s)|^{\mu p},
\end{equation*}
Applying H\"older's inequality with the conjugate exponents $\frac{1}{2-p}$ and $\frac{1}{p-1}$ and using \eqref{eq:fg}, gives
\begin{align}\label{(c)}
\nonumber\int_{0}^{1}r(s)^{2\beta-p+1}|g(s)|^{2\mu}|g'(s)|^{2-p}\,ds & \leq\left(\int_{0}^{1}r(s)^{\beta}|g(s)|^{\mu}|g'(s)|\,ds\right)^{2-p} \\ & \qquad \nonumber\times \left(\int_{0}^{1}r(s)^{\mu}|g(s)|^{q}\,ds\right)^{p-1}\\ & \leq\left(\int_{0}^{1}r(s)^{\beta}|g(s)|^{\mu}|g'(s)|\,ds\right)^{2-p}\left(\int_{-1}^{1}|f(s)|^{q}\,ds\right)^{p-1}.
\end{align}
For the first integral in \eqref{eq:split}, using the definition of $\mu$ and $\beta$, we have
\begin{equation*}
r(s)^{2\beta-1}|g(s)|^{2\mu+2-p}=\left(r(s)^{\mu}|g(s)|^{q}\right)^{p-1}\left(r(s)^{\beta-1}|g(s)|^{\mu+1}\right)^{2-p}.
\end{equation*}
H\"older's inequality with the exponents $\frac{1}{p-1}$ and $\frac{1}{2-p}$, together with \eqref{eq:fg}, gives
\begin{equation}\label{(d)}
\int_{0}^{1}r(s)^{2\beta-1}|g(s)|^{2\mu+2-p}\,ds \leq\left(\int_{-1}^{1}|f(s)|^{q}\,ds\right)^{p-1}\left(\int_{0}^{1}r(s)^{\beta-1}|g(s)|^{\mu+1}\,ds\right)^{2-p}.
\end{equation}

For the integral above, we apply an $L^{1}$ Hardy inequality with a boundary term. Since $\frac{d}{ds}\big[-\frac{(1-s)^{\beta}}{\beta}\big]=(1-s)^{\beta-1} = r(s)^{\beta-1}$ and $|g|^{\mu+1}$ vanishes near $1$, integration by parts and using $(|g|^{\mu+1})' = (\mu +1) |g|^{\mu-1} gg'$ gives
\begin{align}\label{eq:L1hardy}
\int_{0}^{1}r(s)^{\beta-1}|g(s)|^{\mu+1}\,ds & =\frac{|g(0)|^{\mu+1}}{\beta}+\frac{1}{\beta}\int_{0}^{1}r(s)^{\beta}\big(|g(s)|^{\mu+1}\big)'\,ds \nonumber \\ &    \leq\frac{1}{\beta}\left(|f(0)|^{\mu+1}+(\mu+1)\int_{0}^{1}r(s)^{\beta}|g(s)|^{\mu}|g'(s)|\,ds\right).
\end{align}

Combining \eqref{(d)} with \eqref{eq:L1hardy} and $\frac{\alpha}{\beta}=\frac{1}{\mu+1}$, we have
\begin{align*}
    \left( \frac{p-1}{p} \right)^{2-p}\int_{0}^{1} & r(s)^{2\beta-1}|g(s)|^{2\mu+2-p}\,ds \\ & \leq\left(\int_{-1}^{1}|f(s)|^{q}\,ds\right)^{p-1}\left(\frac{|f(0)|^{\mu+1}}{\mu+1}+\int_{0}^{1}r(s)^{\beta}|g(s)|^{\mu}|g'(s)|\,ds\right)^{2-p}.
\end{align*}
The quantity in the last bracket is at most $|f(0)|^{\mu+1}+\int_{0}^{1}r(s)^{\beta}|g(s)|^{\mu}|g'(s)|\,ds$, and the same quantity bounds $\int_{0}^{1}r(s)^{\beta}|g(s)|^{\mu}|g'(s)|\,ds$ in \eqref{(c)}. Inserting this and \eqref{(c)} into \eqref{eq:split}, and using $J_{+}(g)\leq J(g)$, we get
\begin{align}\label{eq:collect} \left(\int_{0}^{1}r(s)^{\beta}|g(s)|^{\mu}|g'(s)|\,ds\right)^{2} & \leq 2\,J(g)\left(\int_{-1}^{1}|f(s)|^{q}\,ds\right)^{p-1} \nonumber \\ & \quad \times\left(|f(0)|^{\mu+1}+\int_{0}^{1}r(s)^{\beta}|g(s)|^{\mu}|g'(s)|\,ds\right)^{2-p}.
\end{align}

Therefore, the inequality \eqref{eq:collect} gives
\begin{align*}
|f(0)|^{\mu+1}+ & \int_{0}^{1}r(s)^{\beta}|g(s)|^{\mu}|g'(s)|\,ds\leq|f(0)|^{\mu+1}\\ &
    +\sqrt{2}\,J(g)^{\frac{1}{2}}\left(\int_{-1}^{1}|f(s)|^{q}\,ds\right)^{\frac{p-1}{2}}\left(|f(0)|^{\mu+1}+\int_{0}^{1}r(s)^{\beta}|g(s)|^{\mu}|g'(s)|\,ds\right)^{\frac{2-p}{2}}.
\end{align*}
We will now apply the Young's inequality to the last two factors, in the following form: for $x,y\geq0$ and $\theta\in(0,1)$, $ y\,x^{\theta}\leq \theta x+(1-\theta)\,y^{\frac{1}{1-\theta}}$. Take $\theta=\frac{2-p}{2}$ , so that $y^{\frac{1}{1-\theta}}=y^{\frac2p}=2^{\frac{1}{p}}J(g)^{\frac{1}{p}}\left(\int_{-1}^{1}|f(s)|^{q}\,ds\right)^{\frac{1}{p'}}$. Subtracting $\frac{2-p}{2}$ times the finite left-hand side and multiplying by $\frac2p$, we obtain
\begin{align}\label{eq:X}
|f(0)|^{\mu+1}+\int_{0}^{1}r(s)^{\beta}|g(s)|^{\mu}|g'(s)|\,ds    \leq\frac{2}{p}|f(0)|^{\mu+1}+2^{\frac{1}{p}}J(g)^{\frac{1}{p}}\left(\int_{-1}^{1}|f(s)|^{q}\,ds\right)^{\frac{1}{p'}}.
\end{align}
The same bound holds on $(-1,0]$.

\medskip

Finally, by \eqref{eq:ptw}, \eqref{eq:X} and symmetry,
\begin{equation}\label{eq:sup}
\sup_{t \in [-1,1]}|f(t)|^{\mu+1}\leq\left(1+\frac{2(\mu+1)}{p}\right)|f(0)|^{\mu+1}
    +(\mu+1)2^{\frac{1}{p}}J(g)^{\frac{1}{p}}\left(\int_{-1}^{1}|f(s)|^{q}\,ds\right)^{\frac{1}{p'}}.
\end{equation}

\medskip

Put $\varepsilon:=2^{-\left(1+q+ \frac{p-1}{p} q \right)}$. We claim that there exists $C>0$ such that
\begin{equation}\label{eq:mid}
  |f(0)|^{\mu+1}\leq C\left(|g(0)|+J(g)^{\frac{1}{p}}\right)\left(\int_{-1}^{1}|f(s)|^{q}\,ds\right)^{\frac{1}{p'}}.
\end{equation}
The justification of this inequality will be divided into two cases.

\medskip

\emph{Case A: $\int_{-1}^{1}|f(s)|^{q}\,ds\geq\varepsilon|g(0)|^{q}$} (this includes $g(0)=0$). Since $\mu=\frac{q}{p'}$ and $f(0)=g(0)$,
\begin{equation*}
    |f(0)|^{\mu+1}=|g(0)|\cdot|g(0)|^{\frac{q}{p'}}\leq\varepsilon^{-\frac{1}{p'}}|g(0)|\left(\int_{-1}^{1}|f(s)|^{q}\,ds\right)^{\frac{1}{p'}},
\end{equation*}
which gives \eqref{eq:mid}.

\emph{Case B: $\int_{-1}^{1}|f(s)|^{q}\,ds<\varepsilon|g(0)|^{q}$.} Then $g(0)\neq0$. For $s\in[0,\frac12]$ we have $r(s)\geq\frac12$, hence $r(s)^{-\frac{p-1}{p} q}\leq2^{\frac{p-1}{p} q}$. By \eqref{eq:fg},
\begin{equation*}
\int_{0}^{1/2}|g(s)|^{q}\,ds\leq2^{\frac{p-1}{p} q}\int_{-1}^{1}|f(s)|^{q}\,ds<2^{\frac{p-1}{p}q}\varepsilon|g(0)|^{q}=2^{-q-1}|g(0)|^{q}.
\end{equation*}
If $|g(s)|>\frac{|g(0)|}{2}$ held on all of $[0,\frac12]$, the left-hand side would be larger than $\frac12\cdot2^{-q}|g(0)|^{q}$, a contradiction. So there is $s_{0}\in[0,\frac12]$ with $|g(s_{0})|\leq\frac{|g(0)|}{2}$. Therefore,
\begin{align} \label{B1}
\nonumber\frac{1}{2} |f(0)|^{\mu+1}\leq\big(1-2^{-(\mu+1)}\big)|g(0)|^{\mu+1} &  \leq|g(0)|^{\mu+1}-|g(s_{0})|^{\mu+1} \\ & \leq(\mu+1)\int_{0}^{1/2}|g(s)|^{\mu}|g'(s)|\,ds.
\end{align}

We repeat previous steps on $[0,\frac{1}{2}]$ without weights. For $r(s)\in[\frac12,1]$, we have $r(s)^{1-p}\leq2^{p-1}$ and $\left( \frac{p-1}{p} \right) \frac{|g(s)|}{r}\leq2\left( \frac{p-1}{p} \right)|g(s)|\leq|g(s)|$, because $2\left( \frac{p-1}{p} \right)<1$. Hence $M_{g}(s)^{2-p}\leq|g(s)|^{2-p}+|g'(s)|^{2-p}$, where $M_{g}$ is defined in \eqref{M_g defn one dimension} and $0<2-p<1$, and the Cauchy--Schwarz inequality, gives
\begin{align*}
\left(\int_{0}^{1/2}|g(s)|^{\mu}|g'(s)|\,ds\right)^{2} &  \\ & \hspace{-10mm} \leq2^{p-1}J(g)\left(\int_{0}^{1/2}|g(s)|^{2\mu+2-p}\,ds+\int_{0}^{1/2}|g(s)|^{2\mu}|g'(s)|^{2-p}\,ds\right).
\end{align*}
The same H\"older arguments as before, with $r(s)$ replaced by $1$, and the bound $\int_{0}^{1/2}|g(s)|^{q}\,ds\leq2^{\frac{p-1}{p} q}\int_{-1}^{1}|f(s)|^{q}\,ds$ give
\begin{align}\label{eq:loc}
\left(\int_{0}^{1/2}|g(s)|^{\mu}|g'(s)|\,ds\right)^{2} & \leq 2^{p-1}2^{\frac{p-1}{p} q(p-1)}J(g)\left(\int_{-1}^{1}|f(s)|^{q}\,ds\right)^{p-1} \nonumber \\ &\quad
\times\left[\left(\int_{0}^{1/2}|g(s)|^{\mu+1}\,ds\right)^{2-p}+\left(\int_{0}^{1/2}|g(s)|^{\mu}|g'(s)|\,ds\right)^{2-p}\right].
\end{align}
Denote 
\begin{equation*}
V:=|f(0)|^{\mu+1}+J(g)^{\frac1p}\left(\int_{-1}^{1}|f(s)|^{q}\,ds\right)^{\frac1{p'}}. 
\end{equation*} 
First, since $\frac2p<2$ and $2^{\frac1p}<2$, the estimate \eqref{eq:X} gives
\begin{equation}\label{B3}
\int_{0}^{1}r(s)^{\beta}|g(s)|^{\mu}|g'(s)|\,ds \leq 4V.
\end{equation}
Second, on $[0,\frac12]$ we have $r(s)^{\beta}\geq2^{-\beta}$, hence $\int_{0}^{1/2}|g(s)|^{\mu}|g'(s)|\,ds\leq2^{\beta+2} \, V$. Third, on $[0,\frac12]$ we have $r(s)^{\beta-1}\geq\min\{1,2^{1-\beta}\}$, so by \eqref{eq:L1hardy},
\begin{equation*}  \int_{0}^{1/2}|g(s)|^{\mu+1}\,ds\leq\frac{1+4(\mu+1)}{\beta\min\{1,2^{1-\beta}\}}V.
\end{equation*}
Inserting these three bounds in \eqref{eq:loc}, and using \eqref{B1}, we find
\begin{equation}\label{aa}
    |f(0)|^{2(\mu+1)}\leq C\,J(g)\left(\int_{-1}^{1}|f(s)|^{q}\,ds\right)^{p-1} \, V^{2-p}.
\end{equation}

To summarize: if $|f(0)|^{\mu+1}\leq J(g)^{\frac1p}\left(\int_{-1}^{1}|f(s)|^{q}\,ds\right)^{\frac{1}{p'}}$, then \eqref{eq:mid} holds with $C=1$. Otherwise $V<2|f(0)|^{\mu+1}$, so $V^{2-p}\leq2^{2-p}|f(0)|^{(\mu+1)(2-p)}$, and \eqref{aa} gives
\begin{equation*}
    |f(0)|^{(\mu+1)p}\leq C\,J(g)\left(\int_{-1}^{1}|f(s)|^{q}\,ds\right)^{p-1} =C\left(J(g)^{\frac1p}\left(\int_{-1}^{1}|f(s)|^{q}\,ds\right)^{\frac1{p'}}\right)^{p},
\end{equation*}
because $\frac{p}{p'}=p-1$. Taking the $p$-th root gives \eqref{eq:mid}.

\medskip

\noindent Finally, we insert \eqref{eq:mid} into \eqref{eq:sup}. This gives
\begin{equation*}
\sup_{ t \in [-1,1]}|f(t)|^{\mu+1} \leq C\left(|g(0)|+J(g)^{\frac1p}\right)\left(\int_{-1}^{1}|f(s)|^{q}\,ds\right)^{\frac{1}{p'}}.
\end{equation*}
By the convexity of $t\mapsto t^{p}$ we have $x+y\leq2^{\frac1{p'}}(x^{p}+y^{p})^{\frac1p}$ for $x,y\geq0$. Raising to the power $p$, and recalling $p(\mu+1)=q(p-1)+p$ and $\frac{p}{p'}=p-1$, we find
\begin{equation*}
    \sup_{t \in [-1,1]}|f(t)|^{\,q(p-1)+p}\leq C\left(|g(0)|^{p}+J(g)\right)\left(\int_{-1}^{1}|f(s)|^{q}\,ds\right)^{p-1}.
\end{equation*}
Finally, Lemma \ref{lem:gs1d} gives 
\begin{equation*}
\int_{-1}^{1}|f'(s)|^{p}\,ds- \left( \frac{p-1}{p} \right)^{p}\int_{-1}^{1}\frac{|f(s)|^{p}}{(1-|s|)^{p}}\,ds   \geq\min\left\{2\alpha^{p-1},\frac{p(p-1)}{2}\right\}\left(|g(0)|^{p}+J(g)\right),
\end{equation*}
which proves the proposition.
\end{proof}
\begin{corollary}\label{Cor53}
Let $1<p<2$ and $q>0$. Let $\Omega\subset\R$ be an open, proper set. Then, for all $u\in C_c^1(\Om)$,
\begin{align*}
\sup_{t\in\Om}&|f(t)|^{q(p-1)+p}  \ \\ & \le\ C_{p,q}\ \left( \int_{\Om}|f'(s)|^{p}\,ds- \left( \frac{p-1}{p} \right)^{p}\int_{\Om}\frac{|f(s)|^{p}}{\dist(s,\Om^c)^p}\,ds \right) \left(\int_{\Om}|f(s)|^q \, ds\right)^{p-1},  
\end{align*}
where $C_{p,q}>0$ is the same as in Proposition \ref{prop:1d}.
\end{corollary}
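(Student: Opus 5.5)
The plan is to reduce Corollary \ref{Cor53} to Proposition \ref{prop:1d} by decomposing $\Om$ into its connected components and rescaling. (The statement writes $u\in C_c^1(\Om)$ but the inequality is in terms of $f$; I read it as $f\in C_c^1(\Om)$.)

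Since $\Om\subset\R$ is open, it is a countable disjoint union of open intervals $I_j$. Because $f$ has compact support in $\Om$, only finitely many components meet $\supp f$. On each $I_j$ we have $\dist(s,\Om^c)=\dist(s,I_j^c)$, as every endpoint of $I_j$ lies in $\Om^c$. Hence both the Hardy deficit and $\int_\Om|f|^q$ split as sums over $j$ of the corresponding quantities on $I_j$ with $f_j:=f|_{I_j}\in C_c^1(I_j)$.

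The first step is to check that each deficit term is nonnegative. For bounded $I_j$ this follows from Proposition \ref{prop:1d}, or from Lemma \ref{lem:gs1d}, after an affine change of variables. For unbounded components, either a half-line $(a,\infty)$ or $(-\infty,b)$, the deficit is the classical one-dimensional Hardy inequality with distance $s-a$, which is nonnegative. Given nonnegativity, the supremum over $\Om$ equals $|f(t_0)|$ for some $t_0$ in a single component $I_{j_0}$. It then suffices to prove
\[
\sup_{I_{j_0}}|f|^{q(p-1)+p}\le C_{p,q}\,D(f_{j_0})\Big(\int_{I_{j_0}}|f|^q\Big)^{p-1},
\]
where $D$ denotes the deficit. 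The right-hand side is monotone in both factors when we add the other components, since all terms are nonnegative.

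For a bounded component $I=(c-\ell,c+\ell)$, set $F(s)=f(c+\ell s)$ for $s\in(-1,1)$. Then $\dist(c+\ell s,I^c)=\ell(1-|s|)$, and the scalings are:
\begin{itemize}
\item $\int_I|f'|^p=\ell^{1-p}\int_{-1}^1|F'|^p$;
\item $\int_I |f|^p\dist^{-p}=\ell^{1-p}\int_{-1}^1|F|^p(1-|s|)^{-p}$;
\item $\int_I|f|^q=\ell\int_{-1}^1|F|^q$.
\end{itemize}
So the right-hand side scales as $\ell^{1-p}\ell^{p-1}=1$, the supremum is unchanged, and Proposition \ref{prop:1d} applied to $F$ gives the claim with the same constant $C_{p,q}$.

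The main obstacle is the half-line components, such as $\Om=(0,\infty)$, which Proposition \ref{prop:1d} does not cover directly. My first approach is a limiting argument. If $\supp f\subset(a,a+R)$, consider the interval $I_L=(a,a+2L)$ with $L\gg R$. On the support, $\dist(s,I_L^c)=\min(s-a,a+2L-s)=s-a$ once $L>R$. So the deficit computed on $I_L$ coincides exactly with the deficit on the half-line, while $\int|f|^q$ and $\sup|f|$ are unchanged. The bounded case then applies verbatim, with no limit even needed, and this handles every unbounded component with nonempty complement. The case $\Om=\R$ is excluded because $\Om$ is proper. The only care needed is the nonnegativity used in the reduction to one component, which holds for every component by the same inequality. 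The corollary then follows with the same constant $C_{p,q}$.
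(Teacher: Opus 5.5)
Your proposal is correct and follows the paper's route: decompose $\Om$ into its component intervals, reduce to a single component, and apply Proposition \ref{prop:1d} after an affine rescaling under which the right-hand side is invariant. You also supply two details that the paper's one-line proof leaves implicit: the nonnegativity of each component's Hardy deficit, which is needed to pass from one component to the full sum, and the handling of half-line components by placing the support inside a large bounded interval $(a,a+2L)$, where the distance to the complement is unchanged on the support.
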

\begin{proof}
 If $\Om$ is an interval, the result follows from Proposition \ref{prop:1d} by scaling and translation. For general open set, we write is as a countable union of disjoint intervals and the extension is straightforward.
\end{proof}
\begin{proof}[Proof of Theorem \ref{HSM domains p<2}]
As we already mentioned, the proof of Theorem \ref{HSM domains p<2} follows \cite[Theorem 1.3]{FrankLoss2012}. Nevertheless, we provide its sketch for clarity of the present work. Let $e_1,\dots,e_N$ be a standard basis in $\R^N$, let $d_j:=d_{e_j}$, where $d_{e_j}$ is defined by \eqref{def: de}. Further, we denote $\widetilde{x}_j=(x_1,\dots,x_{j-1},x_{j+1},\dots,x_N)\in\R^{N-1}$, and, for $u\in C_c^1(\Om)$,
\begin{align*}
g_j(\widetilde{x}_j):=\int_{\R}\left(\left|\frac{\partial u}{\partial x_j}(x)\right|^p-\left(\frac{p-1}{p}\right)^p\frac{|u(x)|^p}{d_j(x)^p}\right)\,dx_j,\quad h_j(\widetilde{x}_j)=\int_{\R}|u(x)|^q\,dx_j.    
\end{align*}
Applying Corollary \ref{Cor53}, for every $x\in \Om$ and $1\le j\le N$ we have 
$$
|u(x)|\le \left(C_{p,q} g_{j}(\widetilde{x}_j) h_j(\widetilde{x}_j)^{p-1}\right)^{\frac{1}{q(p-1)+p}}.
$$
Thus, for some constant $C>0$,
$$
|u(x)|^q\le C \prod_{j=1}^N\left(g_{j}(\widetilde{x}_j) h_j(\widetilde{x}_j)^{p-1}\right)^{\frac{1}{p(N-1)}}.
$$
Integrating the above inequality over $\R^N$ and then applying Loomis–Whitney inequality \cite{MR31538} together with Hölder's and arithmetic-geometric mean inequality, we estimate (up to a multiplicative constant)
\begin{align*}
\|u\|_{L^{q}(\R^N)}^q&\lesssim \int_{\R^N}\prod_{j=1}^N\left(g_{j}(\widetilde{x}_j) h_j(\widetilde{x}_j)^{p-1}\right)^{\frac{1}{p(N-1)}}\,dx\le \prod_{j=1}^N\left(\int_{\R^{N-1}}g_j(\widetilde{x}_j)^{\frac{1}{p}}h_j(\widetilde{x}_j)^{\frac{1}{p'}}\,d\widetilde{x}_j\right)^{\frac{1}{N-1}}\\
&\le \prod_{j=1}^N\left(\|g_j\|_{L^1(\R^{N-1})}^\frac{1}{p}\|h_j\|_{L^1(\R^{N-1})}^\frac{1}{p'}\right)^{\frac{1}{N-1}}\\
&= \|u\|_{L^q(\R^N)}^{\frac{Nq}{(N-1)p'}}\prod_{j=1}^N\|g_j\|_{L^1(\R^{N-1})}^\frac{1}{(N-1)p}\le \|u\|_{L^q(\R^N)}^{\frac{Nq}{(N-1)p'}}\left(\frac{1}{N}\sum_{j=1}^N\|g_j\|_{L^1(\R^{N-1})}\right)^{\frac{N}{(N-1)p}}.
\end{align*}
Using $q\left(1-\frac{N}{(N-1)p'}\right)=\frac{N}{N-1}$, rearranging gives
\begin{equation}\label{ll}
\|u\|_{L^q(\R^N)}^p\le \frac{C}{N}\sum_{j=1}^N\|g_j\|_{L^1(\R^{N-1})}=\frac{C}{N}\sum_{j=1}^N\left(\int_{\Om}\left(\left|\frac{\partial u}{\partial x_j}(x)\right|^p-\left(\frac{p-1}{p}\right)^p\frac{|u(x)|^p}{d_j(x)^p}\right)\,dx\right)
\end{equation}
In the final step, we average \eqref{ll} over all possible choices of coordinates. More precisely, we apply \eqref{ll} to the coordinates $Re_1,\dots, Re_N$, $R\in\SO(N)$, integrate against the Haar measure on $\SO(N)$ and use 
\begin{align*}
\int_{\SO(N)}|Re_j\cdot\xi|^p\,dR&=\kappa_{N,p}|\xi|^p,\\
\int_{\SO(N)}d_{Re_j}(x)^{-p}\,dR&=\frac{1}{\left|\mathbb{S}^{N-1}\right|}\int_{\mathbb{S}^{N-1}}d_{e}(x)^{-p}\,d\sigma(e)=\kappa_{N,p}D_{\Om,p}(x)^{-p},
\end{align*}
where $\kappa_{N,p}=\frac{\sqrt{\pi}\,\Gamma\left(\frac{N+p}{2}\right)}{\Gamma\left(\frac{p+1}{2}\right)\Gamma\left(\frac{N}{2}\right)}$. That ends the proof. 
\end{proof}

\bigskip

\textbf{Acknowledgement:} A. Banerjee is supported by the Doctoral Fellowship of the Indian Statistical Institute, Delhi Centre. V. Sahu is supported by the Anusandhan National Research Foundation (ANRF) through the National Postdoctoral Fellowship (PDF/2025/004611).

\end{document}